\newcommand{\newword}[1]{\textbf{\emph{#1}}} 
\newtheorem{Theorem}{Theorem}[section]
\newtheorem{Proposition}[Theorem]{Proposition}
\newtheorem{Corollary}[Theorem]{Corollary}
\newtheorem{Conjecture}[Theorem]{Conjecture}
\theoremstyle{definition}
\newtheorem{Definition}[Theorem]{Definition}
\newtheorem{Example}[Theorem]{Example}
\newtheorem{Problem}[Theorem]{Problem}
\theoremstyle{remark}
\newtheorem{Remark}[Theorem]{Remark}
\let\bb\mathbb
\newcommand{\abs}[1]{\left\lvert #1 \right\lvert}
\title{On Good Infinite Families of Toric Codes or the Lack Thereof}
\author[M.~Dolorfino]{Mallory~Dolorfino}
\address{Mallory~Dolorfino\\ Kalamazoo College \\ Kalamazoo, Michigan, USA \\
  \href{mailto:mallory.dolorfino19@kzoo.edu}
  {{\ttfamily\upshape mallory.dolorfino19@kzoo.edu}}}
\author[C.~Horch]{Cordelia~Horch}
\address{Cordelia Horch\\ Occidental College \\ Los Angeles, California\\ USA\\
 \href{mailto:chorch@oxy.edu}
  {{\ttfamily\upshape chorch@oxy.edu}}}
\author[K.~Jabbusch]{Kelly Jabbusch}
\address{Kelly Jabbusch\\ Department of Mathematics \& Statistics\\ University
  of Michigan--Dearborn \\ Dearborn, Michigan
 \\ USA \\ 
  \href{mailto:jabbusch@umich.edu}%
  {{\ttfamily\upshape jabbusch@umich.edu}}}
  \author[R.~Martinez]{Ryan~Martinez}
\address{Ryan~Martinez\\ Harvey Mudd College\\ Claremont\\ California\\ USA\\ 
  \href{mailto:rmmartinez@hmc.edu}
  {{\ttfamily\upshape rmmartinez@hmc.edu}}}
\begin{document}

\maketitle

\begin{abstract}Toric codes, introduced by Hansen, are the natural extensions of Reed-Solomon codes.  A toric code is a $k$-dimensional subspace of $\bb{F}_q^n$,  determined by a toric variety or its associated integral convex polytope $P \subseteq [0,q-2]^n$, where $k=|P\cap \mathbb{Z}^n|$ (the number of integer lattice points of $P$). 
There are two relevant parameters that determine the quality of a code: the information rate, which measures how much information is contained in a single bit of each codeword; and the relative minimum distance, which measures how many errors can be corrected relative to how many symbols each codeword has. 
Soprunov and Soprunova defined a good infinite family of codes to be a sequence of codes of unbounded polytope dimension such that neither the corresponding information rates nor relative minimum distances go to 0 in the limit. 
We examine different ways of constructing families of codes by considering polytope operations such as the join and direct sum. In doing so, we give conditions under which no good family can exist and strong evidence that there is no such good family of codes.
\end{abstract}

\section{Introduction}

Toric codes are the natural extensions of  Reed-Solomon codes.  Fix a finite field $\bb{F}_q$ and let $P \subseteq \mathbb{R}^n$ be an integral convex polytope, which is contained in the $n$-dimensional box $[0,q-2]^n$ and has $k$ lattice points.  The toric code $C_P(\bb{F}_q)$ is obtained by evaluating linear combinations of monomials corresponding to the lattice points of $P$ over the finite field $\bb{F}_q$. Toric codes were introduced by Hansen for $n=2$, (see \cite{Hansen98} and \cite{Hansen}), and subsequently studied by various authors including Joyner \cite{Joyner04},  Little and Schenck \cite{LSchenk}, Little and Schwarz \cite{LSchwarz}, Ruano \cite{ruano}, and Soprunov and Soprunova \cite{SS1}, \cite{SS2}. 

To determine if a given code is ``good" from a coding theoretic perspective, one considers parameters associated to the code:  the block length, the dimension, and the minimum distance (see Section \ref{preliminaries} for the precise definitions).  For the class of toric codes, the first two parameters are easy to compute.  Given a toric code $C_P(\bb{F}_q)$, the block length is $N=(q-1)^n$, and the dimension of the code is given by the number of lattice points $k= |P \cap \bb{Z}^n|$ \cite{ruano}. The minimum distance is not as easy to compute, but various authors have given formulas for computing (or bounding) minimum distances for toric codes coming from special classes of polytopes.  In the case of toric surface codes (where $n=2$), Hansen computed the minimum distance for codes coming from Hirzebruch surfaces (\cite{Hansen98}, \cite{Hansen}), Little and Schenck determined upper and lower bounds for the minimum distance of a toric surface code by examining Minkowski sum decompositions \cite{LSchenk}, and Soprunov and Soprunova improved these bounds for surface codes by examining the Minkowski length \cite{SS1}.  For toric codes arising from polytopes $P \subseteq \mathbb{R}^n$ with $n>2$, Little and Schwarz used Vandermonde matrices to compute minimum distances of codes from simplices and rectangular polytopes \cite{LSchwarz}, and Soprunov and Soprunova computed the minimum distance of a code associated to a product of polytopes and a code associated to a $k$-dialate of a pyramid over a polytope \cite{SS2}.   In general, an ideal code will have minimum distance and dimension large with respect to its block length.  Restating these parameters with this in mind, we will consider the relative minimum distance, $\delta(C_P)$, of a toric code $C_P$, which is the ratio of the minimum distance, $d(C_P)$, to the block length:  $\delta(C_P) = \frac{d(C_P)}{N}$, and the information rate, $R(C_P),$ which is the ratio of the dimension of the code to the block length: $R(C_P)= \frac{k}{N}.$

In this paper we investigate infinite families of toric codes, motivated by the work of Soprunov and Soprunova \cite{SS2}.  They define a \newword{good} infinite family of toric codes as a sequence of toric codes where neither the relative minimum distances nor information rates of the codes go to zero in the limit. 
     In Section \ref{infinite families}, we define this formally and explore methods of constructing infinite families through different polytope operations, in particular the join and the direct sum.  Using these operations, we construct examples of infinite families of toric codes, but like the examples constructed in \cite{SS2}, none are good. 
     
    These examples lead us to conjecture that there are in fact no examples of such good infinite families of  toric  codes.  In Section \ref{no good family} we build intuition and give evidence towards Conjecture \ref{conjecture}. In particular, we verify the conjecture for infinite families whose corresponding polytopes contain hypercubes of unbounded dimension or  have certain Minkowski lengths.  
    
    \begin{Theorem} Let $\{P_i\}$ be an infinite family of codes defined over the field $\bb{F}_q$.  
    \begin{enumerate}
        \item Proposition \ref{specialcase1}: If the $\{P_i\}$ contain hypercubes of unbounded dimension, then $\delta(P_i) \to 0$ and the infinite family is not good.
        \item Proposition \ref{specialcase2}: If the Minkowski  lengths of the polytopes $P_i$ are unbounded as $i \to \infty$, then $\delta(P_i) \to 0$ and the infinite family is not good. 
        \end{enumerate}
    \end{Theorem}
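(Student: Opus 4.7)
The plan is to upper bound $d(C_{P_i})$ in each case by producing an explicit low-weight codeword; dividing by the block length $N_i=(q-1)^{n_i}$ will then force $\delta(P_i)\to 0$.

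For part (1), after a monomial substitution that makes the hypercube axis-aligned, one may assume $P_i$ contains the lattice hypercube $\{0,1\}^{m_i}$ in some $m_i$ coordinates $i_1<\cdots<i_{m_i}$, with $m_i\to\infty$. Fix any $c_1,\dots,c_{m_i}\in\bb{F}_q^*$ and consider
\[
f_i(x)\;=\;\prod_{j=1}^{m_i}\bigl(x_{i_j}-c_j\bigr).
\]
Every monomial of $f_i$ lies in this hypercube, so $f_i$ produces a codeword of $C_{P_i}$. Since the factors involve disjoint variables, the set of $x\in(\bb{F}_q^*)^{n_i}$ on which $f_i(x)\ne 0$ is a direct product of cardinality $(q-2)^{m_i}(q-1)^{n_i-m_i}$, bounding $d(C_{P_i})$. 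Thus, assuming $q\ge 3$,
\[
\delta(P_i)\;\le\;\left(\tfrac{q-2}{q-1}\right)^{m_i}\longrightarrow 0.
\]

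For part (2), unbounded Minkowski length $L_i\to\infty$ means that each $P_i$ contains a Minkowski sum $Q_{i,1}+\cdots+Q_{i,L_i}$ of lattice polytopes, each with at least two lattice points. From each summand I would pick distinct lattice points $a_{i,j},b_{i,j}\in Q_{i,j}$, set $v_{i,j}=b_{i,j}-a_{i,j}$, choose $c_{i,j}\in\bb{F}_q^*$, and form the binomial $f_{i,j}(x)=x^{a_{i,j}}\bigl(1-c_{i,j}\,x^{v_{i,j}}\bigr)$. The product $f_i=\prod_j f_{i,j}$ has support in $P_i$ because support is subadditive under Minkowski sums. In logarithmic coordinates on $(\bb{F}_q^*)^{n_i}$ (via a generator), the non-vanishing locus of $f_i$ is the intersection of the $L_i$ affine-hyperplane complements $\{y:v_{i,j}\cdot y\not\equiv d_{i,j}\pmod{q-1}\}$ in $(\bb{Z}/(q-1))^{n_i}$.

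The main obstacle is controlling the size of this intersection: a single factor gives only $\delta\le(q-2)/(q-1)$, which does not tend to $0$, so the estimate must exploit $L_i$ growing. I would do this by leveraging $P_i\subseteq[0,q-2]^{n_i}$: in any fixed rational direction the Minkowski sum of parallel chosen segments would exceed the box after boundedly many terms, so as $L_i\to\infty$ the $\bb{Z}$-span of $\{v_{i,j}\}$ must have rank tending to infinity. From there one extracts an unboundedly large subfamily $\{v_{i,j}\}_{j\in J_i}$ that remains independent after reduction modulo $q-1$, and chooses the corresponding $c_{i,j}$ so the $|J_i|$ inequations are jointly realizable; the intersection then factors and one obtains
\[
\delta(P_i)\;\le\;\left(\tfrac{q-2}{q-1}\right)^{|J_i|}\longrightarrow 0.
\]
The delicate step is precisely the extraction of a $\bb{Z}/(q-1)$-independent subfamily, since $\bb{Z}$-independence need not survive the reduction when $q-1$ has small prime factors; an alternative is to invoke the Minkowski-length upper bound on $d(C_P)$ from Soprunov--Soprunova \cite{SS2} directly and avoid this combinatorial extraction altogether.
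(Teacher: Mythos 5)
Part (1) of your proposal is correct and is essentially the paper's argument: the paper combines the monotonicity $\delta(P)\geq\delta(Q)$ for $P\subseteq Q$ (Proposition \ref{deltasubset}) with the Little--Schwarz value $\delta([0,1]^{m})=\left(\tfrac{q-2}{q-1}\right)^{m}$ (Theorem \ref{thm:mindistBox}), which is exactly the codeword $\prod_j(x_{i_j}-c_j)$ you exhibit; the unimodular/monomial-equivalence reduction is likewise the one the paper invokes. (One small point of rigor: ``unbounded'' only gives a subsequence with $m_i\to\infty$; you then need, as the paper notes, that $\delta(P_i)$ converges by the definition of an infinite family to conclude the full limit is $0$.)

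Part (2) has a genuine gap, and it is exactly the step you flag yourself. Your product of binomials $f_i=\prod_j x^{a_{i,j}}(1-c_{i,j}x^{v_{i,j}})$ only yields $\delta(P_i)\leq\left(\tfrac{q-2}{q-1}\right)^{|J_i|}$ if the non-vanishing conditions $v_{i,j}\cdot y\not\equiv d_{i,j}\pmod{q-1}$ genuinely factor, which requires the $v_{i,j}$ (for $j\in J_i$) to be part of a basis of $(\bb Z/(q-1))^{n_i}$ --- a much stronger condition than $\bb Z$-linear independence, and one that can fail when $q-1$ is composite (e.g.\ $v=(2,0,\dots)$ with $2\mid q-1$). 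Moreover, your intermediate claim that the $\bb Z$-rank of $\{v_{i,j}\}$ tends to infinity is itself not justified by the box constraint alone: the box only bounds how many chosen segments can be \emph{parallel}, and pairwise non-parallel vectors need not have large rank. So as written the argument does not close. The paper takes a different and cleaner route that avoids the modular issue entirely: it proves a purely combinatorial statement (Proposition \ref{minkowskiIffBox}) that $\{L(P_i)\}$ unbounded forces $\{M(P_i)\}$ unbounded --- if $L(P_i)>(q-2)N$ then, since at most $q-2$ summands of a Minkowski decomposition can share a direction inside $[0,q-2]^{n_i}$, one finds $N$ distinct directions and hence a unimodular copy of $[0,1]^N$ inside $P_i$ --- and then simply applies part (1), where the relevant polynomial is a product of univariate factors in \emph{disjoint} variables and no independence modulo $q-1$ is needed. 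If you want to salvage your direct approach, you would either need to carry out the extraction of a subfamily that is unimodular over $\bb Z/(q-1)$ (nontrivial), or follow the paper and push the whole argument back to the polytope level before writing down any polynomial.
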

   
   We conjecture that if the Minkowski length is bounded then the information rate, $R(P_i)$, tends to $0$, and discuss some special cases.  
     We conclude the paper with some questions for future research if our conjecture is proved.  

\section{Preliminaries}\label{preliminaries}
To construct a toric code over a finite field $\bb{F}_q$ for some prime power $q$, let $P \subseteq [0,q-2]^n \subseteq \bb{R}^n$ be an integral convex polytope with $k$ lattice points. 
The toric code $C_P(\bb{F}_q)$ is obtained by evaluating linear combinations of monomials corresponding to the lattice points of $P$ over the finite field $\bb{F}_q$. More concretely, $C_P(\bb{F}_q)$ is given by a generator matrix with each row corresponding to a lattice point $p = (p_1, p_2, \ldots, p_n)  \in (P \cap \bb{Z}^n)$ and each column corresponding to an element $a = (a_1, a_2, \ldots, a_n) \in (\bb{F}_q^\times){^n}$ defined by:
\begin{equation*}
    G = (a^p),
\end{equation*}
where $a^p=a_1^{p_1}a_2^{p_2} \cdots a_n^{p_n}$.
Equivalently, $C_P(\bb{F}_q)$ is the image of the following evaluation map. Let 
\begin{equation*}
    \mathcal{L}_P = \text{span}_{\bb{F}_q}\{\textbf{x}^p|\,p\in P\cap \bb{Z}^n\}
\end{equation*}
where $\textbf{x}^p = x_1^{p_1}\cdots x_n^{p_n}.$
 
If we choose an ordering of the elements of 
$(\mathbb{F}_q^\times)^n$, 
then the map
\begin{align*}
     \varepsilon: \mathcal{L}_P &\rightarrow \bb{F}_q^{{(q-1)}^n}\\
         f & \mapsto (f(a)|a \in (\bb{F}_q^\times){^n})
\end{align*}
evaluates polynomials $f$ in $\mathcal{L}_P$ at each of
the points of $(\bb{F}_q^\times){^n}$ to give a vector of
$\bb F_q^{(q-1)^n}$. The image of this map over all 
polynomials in $\mathcal{L}_P$ gives the toric code. It can be verified that $\varepsilon$ is a linear
map between the vector spaces $\mathcal L_P$ and $\bb F_q^{(q-1)^n}$. Thus the toric code is a vector subspace of $\bb F_q^{(q-1)^n},$ and is therefore a \newword{linear code}. If we choose $\{\textbf x^p | p \in P \cap \bb Z^n\}$ as our basis for $\mathcal L_P$, and the standard
basis of $\bb F_q^{(q-1)^n}$, then the matrix of $\varepsilon$ 
corresponds exactly with the generator matrix $G$. 
For simplicity we will generally omit the reference to $\bb F_q$ and refer to the toric code as $C_P$.

\begin{Example}\label{ex:UnitBox}
Let $q=5$ and $n = 2$. Consider the polytope $P \subseteq \bb{R}^2$ with the $k = 4$ lattice points $(0,0), (1,0), (0,1), (1,1)$ shown below.

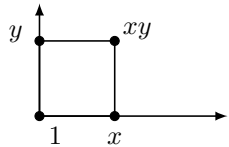
\begin{figure}[h!] \label{fig:box}
    \begin{center}
    \begin{tikzpicture}[scale=1]
    \draw [semithick, ->, >=latex] (0,0) -- (2.5,0);
    \draw [semithick, ->, >=latex] (0,0) -- (0,1.5);
    \node [left] at (-.1,1.1) {$y$};
    \node [below right] at (0,-0.02) {$1$};
    \node [below] at (1,-0.09) {$x$};
    \node [above] at (1.3,0.9) {$xy$};
    \draw [semithick, black] (0,0) -- (0,1) -- (1,1) -- (1,0) -- cycle;
    \draw [fill] (1,1) circle [radius=0.06];
    \draw [fill] (0,0) circle [radius=0.06];
    \draw [fill] (1,0) circle [radius=0.06];
    \draw [fill] (0,1) circle [radius=0.06];
    \end{tikzpicture}
    \end{center}
    \caption{The polytope $P$.} 
    \end{figure}

The toric code $C_P(\bb{F}_5)$ is given by the $4 \times 16$ generator matrix $G$ below. In particular, each row of $G$ corresponds to a basis element of $\mathcal L_P$, $\{1, x, y, xy\}$ (corresponding to each of the four lattice points $p\in P$) and each column corresponds to a particular evaluation of the 
polynomial, one of the 16 elements $a \in (\bb{F}_5^\times)^2$. 
Finally, the elements of the toric code $C_P(\bb F_5)$ are obtained
by choosing the coefficients from $\bb F_5$ to use with the basis of $\mathcal L_P$. In particular, $$C_P(\bb F_5) = \{a G | a \in \bb F_5^4\}.$$
 The full generator matrix for our example is
\begin{equation*}
    G = 
    \begin{pmatrix}
1 & 1 & 1 & 1 & 1 & 1 & 1 & 1 & 1 & 1 & 1 & 1 & 1 & 1 & 1 & 1\\
1 & 1 & 1 & 1 & 2 & 2 & 2 & 2 & 3 & 3 & 3 & 3 & 4 & 4 & 4 & 4\\
1 & 2 & 3 & 4 & 1 & 2 & 3 & 4 & 1 & 2 & 3 & 4 & 1 & 2 & 3 & 4\\
1 & 2 & 3 & 4 & 2 & 4 & 1 & 3 & 3 & 1 & 4 & 2 & 4 & 3 & 2 & 1 
\end{pmatrix}
\end{equation*}
We can see that the first row corresponds to the basis element $1$ (the second to $x$ and so on),
and each column represents evaluating the row's basis element at a different member of $(\bb F_5^\times)^2$. We immediately see from this generator matrix that $C_P(\bb F_5)$ is an at most 
4 dimensional vector subspace of $\bb F_5^{16}$. It turns out that 
$C_P(\bb F_5)$ is exactly 4 dimensional since the polytope $P$ has exactly 4 lattice points.

\end{Example}

In general the quality of a toric code $C = C_P(\bb F_q)$, where $P$ is a convex integral lattice polytope living in $[0,q-2]^n$ can be classified by the following parameters:

\begin{Definition}
The \newword{block length}, $N$, is the number of entries in each of
$C$'s codewords. Based on the construction of toric codes, the block length of $C$ is exactly $(q-1)^n$.
\end{Definition}
Intuitively, the block length is the number of characters or symbols of $\bb F_q$ we \emph{write} 
to represent an element of our code $C$.
In the above example,  Example \ref{ex:UnitBox}, we have block length $(5-1)^2 = 16.$

\begin{Definition}
The \newword{dimension}, $k$, is $C$'s dimension over $\bb F_q$. If
$P$ has $\abs{P \cap \bb Z^n}$ lattice points, then the dimension of $C$ is exactly $k = \abs{P \cap \bb Z^n}$ (the proof of this equality is given in \cite{ruano}). 
\end{Definition}
Intuitively, the dimension is the number of characters or symbols of $\bb F_q$ we \emph{need} to
represent an element of our code $C$.
In the above example, Example \ref{ex:UnitBox}, the  dimension of $C_P$ is 4. That is, even though we write 16 symbols of $\bb F_5$ to
represent each codeword, we are only storing 4 symbols of $\bb F_5$ worth in data.

The reason we choose to use this storage inefficiency is so that we can correct errors in
our codewords. For example, suppose we are given the following codeword with an error 
from our running example above:
$$w = (1,1,1,1, ~ 1,1,1,1, ~ 1,1,1,1, ~ 1,1,1,0).$$
If we operate under the assumption that most errors occur independently on individual symbols of the code
with relatively small probability, then 
we can correct $w$ to 
$$w' = (1,1,1,1, ~ 1,1,1,1, ~ 1,1,1,1, ~ 1,1,1,1) = (1,0,0,0) ~ G,$$
which is indeed in our code $C_P(\bb F_5)$. This notion is formalized by Hamming distance: the
number of differences between two codewords. We see that the Hamming distance between $w$ and 
$w'$ is 1, and in fact there is no other $c \in C_P(\bb F_5)$ that is this close 
to $w$. As another example, consider
$$v = (0,0,0,0, ~ 1,1,1,1, ~ 0,0,0,0, ~ 1,1,1,1).$$
We see that both 
$$v' = (0,0,0,0, ~ 1,1,1,1, ~ 2,2,2,2, ~ 3,3,3,3) = (4,1,0,0) G$$ 
and
$$v'' = (0,0,0,0, ~ 0,0,0,0, ~ 0,0,0,0, ~ 0,0,0,0) = (0,0,0,0) G$$ 
are Hamming distance $8$ away from $v$, so we don't know what to correct to. It is customary
to use the minimum distance to measure what the largest error we can correct is.
\begin{Definition}
The \newword{minimum distance} $d$ of a code $C$ is the minimum Hamming distance
between any two codewords in $C$.  Equivalently, the minimum distance is the minimum weight of a codeword in $C$, where the weight of a codeword $w$ is the Hamming distance between $w$ and $(0, 0, \ldots, 0) \in \mathbb{F}_q^N.$
Because of this equivalence, we have  
for a toric code $C_P$ that the minimum distance is given by
\begin{equation*}
    d(C_P) = (q-1)^n - \max_{0 \neq f \in \mathcal{L}_P} \abs{Z(f)}.
\end{equation*}
where $Z(f)$ is the set of $\bb F_q$ zeros of $f$.  We'll use $N(P)$ to denote the $\max_{0 \neq f \in \mathcal{L}_P} \abs{Z(f)}$.
\end{Definition}

\begin{Remark}
The equivalence of the minimum Hamming distance between two codewords in $C$ and the minimum weight of a codeword in $C$ holds only for linear codes, and  is thus valid for our considerations.
\end{Remark}

Suppose we have a code $C$ with minimum distance $d$, and we are given an erroneous codeword $w$ and
valid codeword $w' \in C$ such that the Hamming distance between $w$ and $w'$ is $\ell \leq \lfloor (d-1)/2 \rfloor $. Since the Hamming distance defines a metric on $\bb F_q^N$, we know that for all other 
codewords $c \in C$ we have 
$$dist(c, w) \geq \abs{dist(c, w') - dist(w', w)} \geq d - \lfloor (d-1)/2 \rfloor 
= \lceil (d+1)/2\rceil > \lfloor (d-1)/2 \rfloor.$$
In other words, \emph{we know} that $w'$ is the closest codeword to $w$ in $C$. Thus, for a code
$C$ with minimum distance $d$ we know we can correct $\lfloor (d-1)/2 \rfloor$ errors.

Returning to Example \ref{ex:UnitBox}, to compute the minimum distance we must count the number of $(\bb{F}_5^{\times})^2$ zeros of polynomials in $\mathcal L_P = \text{span}\{1, x, y, xy\}$.
If $f(x,y) \in \mathcal L_P$ factors into two linear terms, say $f(x,y) = (x-a)(y-b)$, then if $x = a$, there are $(q-1)$ choices for $y$ and if $y=b, x\neq a$, then there are $(q-2)$ choices for $x$ such that $f$ evaluates to $0.$ Therefore $N(P) \geq (q-1) + (q-2) = 2q-3$, which, when $q=5$, is $7$. If $f(x,y) \in \mathcal L_P$ is an irreducible polynomial over $\bb{F}_5$, then the zero set of $f(x,y)$ is an affine conic section over $\bb{F}_5$, and hence contains at most six $\bb{F}_5$-rational points, by the Hasse-Weil bound.  Thus an irreducible polynomial can have at most six zeros.  Therefore, the maximum number of zeros of $f(x,y) \in \mathcal L_P$ is seven, and the minimum distance is
\begin{equation*}
    d(C_P)= (5-1)^2 - 7 = 9. 
\end{equation*}
In other words, we are sure to be able to correct errors on 
$\lfloor (9-1)/2 \rfloor = 4$ or fewer symbols.

In general, an ideal code will have $d$ and $k$ large with respect to $N$. This means that the code will correct as many errors as possible and convey a lot of information while maintaining short codewords, which are easier to work with computationally. 

\begin{Remark} One can perform certain operations on a polytope without changing the parameters of the code it yields.  If  $t: \bb{Z}^n \to \bb{Z}^n$  is a unimodular affine transformation, that is $t(\mathbf{x}) = M \mathbf{x} + \lambda$, where $M \in \text{GL}(n, \bb{Z})$ is a unimodular matrix and $\lambda$ has integer entries, such that $t$ maps one polytope $P_1 \subseteq \bb{R}^n$ to another $P_2 \subseteq \bb{R}^n$, then we say that $P_1$ and $P_2$ are lattice equivalent.  In this case, the codes $C_{P_1}$ and $C_{P_2}$ are monomially equivalent \cite{LSchwarz}*{Theorem 4}.  Note that two lattice equivalent polytopes have the same number of lattice points, and monomially equivalent codes share the same parameters. \end{Remark}

The motivation for this paper comes from the work of Soprunov and Soprunova \cite{SS2} who construct classes of examples of higher dimensional toric codes and propose the problem of finding an infinite family of good toric codes. To analyze these families of codes, we need to slightly modify our parameters.

\begin{Definition}
The \newword{information rate}, $R(C_P)$, measures how large the dimension of the code is relative to the block length of the code:
\begin{equation*}
    R(C_P) = \frac{k}{N}.
\end{equation*}
\end{Definition}

\begin{Definition}
The \newword{relative minimum distance}, $\delta(C_P)$, measures how large the minimum distance is relative to the block length of the code:
\begin{equation*}
    \delta(C_P) = \frac{d(C_P)}{N}.
\end{equation*}
In the remainder of the paper we will use the simpler notation of $R(P)$ and $\delta(P)$ instead of 
$R(C_P)$ and $\delta(C_P)$. 
\end{Definition}

Returning to Example \ref{ex:UnitBox} again, the information rate is $\displaystyle R(P) = \frac{4}{(q-1)^2}$ and the relative minimum distance is $\displaystyle \delta(P) = \frac{(q-1)^2 - (2q-3)}{(q-1)^2}$. When $q=5$, $R(P) = \frac{1}{4}$ and $\delta(P) = \frac{9}{16}
$.

Given a polytope $P \subset \mathbb{R}^n$, we can view $P$ as sitting in a larger ambient space $\mathbb{R}^m$, for $m>n$.   The next proposition shows that $\delta(P)$ is invariant under the dimension of the embedded space.

\begin{Proposition}
\label{delta dimension invariant}
Let $P \subseteq [0,q-2]^n \subset \bb R^n$ be an integral  convex lattice polytope and
let $m > n$. Let $P'$ be lattice equivalent to the natural embedding of $P$ in $\bb R^m$,
$P \times \{0\}^{m-n}$.
Then working over $\bb F_q$,
$$\delta(P) = \delta(P').$$
\end{Proposition}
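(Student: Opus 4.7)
The plan is to first reduce to the canonical embedding and then directly count zeros. Since $P'$ is lattice equivalent to $Q := P \times \{0\}^{m-n}$, the remark on unimodular affine transformations guarantees that $C_{P'}$ and $C_Q$ are monomially equivalent, hence share the same $N$, $d$, and $\delta$. So it suffices to prove $\delta(P) = \delta(Q)$.

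Next I would identify the space of polynomials $\mathcal{L}_Q$. The lattice points of $Q$ are exactly $(P \cap \mathbb{Z}^n) \times \{0\}^{m-n}$, so every monomial $\mathbf{x}^p$ with $p \in Q \cap \mathbb{Z}^m$ involves only the variables $x_1, \ldots, x_n$ (the exponents on $x_{n+1}, \ldots, x_m$ are zero). Thus the natural inclusion gives a bijection $\mathcal{L}_P \to \mathcal{L}_Q$ sending $g(x_1,\ldots,x_n)$ to the same expression viewed as an element of $\mathbb{F}_q[x_1,\ldots,x_m]$.

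The key step is then to compare $N(Q)$ with $N(P)$. For any nonzero $f \in \mathcal{L}_Q$ corresponding to $g \in \mathcal{L}_P$, a point $(a_1,\ldots,a_m) \in (\mathbb{F}_q^\times)^m$ is a zero of $f$ if and only if $(a_1,\ldots,a_n)$ is a zero of $g$ in $(\mathbb{F}_q^\times)^n$, independently of the trailing coordinates $a_{n+1},\ldots,a_m$. Each zero of $g$ in $(\mathbb{F}_q^\times)^n$ therefore lifts to exactly $(q-1)^{m-n}$ zeros of $f$ in $(\mathbb{F}_q^\times)^m$, giving
\begin{equation*}
N(Q) = (q-1)^{m-n} \, N(P).
\end{equation*}

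Finally I would just plug this into the formula for the minimum distance:
\begin{equation*}
d(C_Q) = (q-1)^m - N(Q) = (q-1)^{m-n}\bigl[(q-1)^n - N(P)\bigr] = (q-1)^{m-n}\, d(C_P),
\end{equation*}
and dividing by the block length $(q-1)^m$ yields $\delta(Q) = d(C_P)/(q-1)^n = \delta(P)$. There is no real obstacle here; the only subtle point is the observation that the bijection $\mathcal{L}_P \leftrightarrow \mathcal{L}_Q$ preserves the set of zeros up to the factor $(q-1)^{m-n}$, which falls out immediately from the fact that the added variables do not appear in any monomial of $\mathcal{L}_Q$.
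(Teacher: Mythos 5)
Your proposal is correct and follows essentially the same route as the paper: reduce to the canonical embedding via lattice equivalence, identify $\mathcal{L}_{P'}$ with $\mathcal{L}_P$, observe that each zero in $(\mathbb{F}_q^\times)^n$ lifts to exactly $(q-1)^{m-n}$ zeros in $(\mathbb{F}_q^\times)^m$, and cancel the factor in the ratio $d/N$. No gaps.
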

\begin{proof}
Since lattice equivalent polytopes have the same minimum
distances \cite{LSchwarz}*{Theorem 4} it suffices to let 
$P' = P \times \{0\}^{m-n}$. We will show that 
$\delta(P') = \delta(P)$ by computing the minimum distance
of $C_{P'}$ and $C_P$ directly.

Let $Z_k(f)$ denote the zero set of the polynomial
$f$ over $(\bb F_q^\times)^k$, where $f$ has $k$ variables. Notice that 
if $f \in \bb F_q[x_1, \ldots, x_k]$ then 
$$\abs{Z_{k+1}(f)} = (q-1)\abs{Z_k(f)}$$
This is because for each $(\bb F_q^\times)^k$ zero of $Z_k(f)$, we get
$(q-1)$ $(\bb F_q^\times)^{k+1}$ zeros of $Z_{k+1}(f)$: one for
each value we set to the extra variable. Thus by induction we have 
for $f \in \bb F_q[x_1, \ldots, x_n]$,
$$\abs{Z_m(f)} = (q-1)^{m-n} \abs{Z_n(f)}$$

Now, since each $f \in \mathcal L_{P'}$ exclusively uses the first 
$n$ of its $m$ available variables by construction, we see that 
each $f \in \mathcal L_{P'}$ is a member of $\bb F_q[x_1, \ldots, x_n].$
Furthermore, since $P'$ is a copy of $P$ in a higher dimension, we 
have that $\mathcal L_P = \mathcal L_{P'}.$
Lastly, note that by definition
$$\delta(P) = \frac{(q-1)^n - \max\limits_{0 \neq f \in \mathcal L_{P}}\abs{Z_n(f)}}{(q-1)^n} = 1 - \max_{0 \neq f \in \mathcal L_P}\frac{\abs{Z_n(f)}}{(q-1)^n}$$
and 
$$\delta(P') = \frac{(q-1)^m - \max\limits_{0 \neq f \in \mathcal L_{P'}}\abs{Z_m(f)}}{(q-1)^m}= 1 - \max_{0 \neq f \in \mathcal L_{P'}}\frac{\abs{Z_m(f)}}{(q-1)^m}$$
It follows that 
$$\delta(P') = 1 - \max_{0 \neq f \in \mathcal L_{P'}}\frac{\abs{Z_m(f)}}{(q-1)^m}
= 1 - \max_{0 \neq f \in \mathcal L_P}\frac{(q-1)^{m-n}\abs{Z_n(f)}}{(q-1)^m}
= 1 - \max_{0 \neq f \in \mathcal L_P}\frac{\abs{Z_n(f)}}{(q-1)^n}
= \delta(P).$$\end{proof}

Conversely, the information rate will change based on the embedded dimension.

\begin{Proposition}\label{prop:inforateEmbedd}
Let $P \subseteq [0,q-2]^n \subset \bb R^n$ be an integral  convex lattice polytope  and
let $m > n$. Let $P'$ be lattice equivalent to the natural embedding of $P$ in $\bb R^m$,
$P \times \{0\}^{m-n}$.
Then working over $\bb F_q$,
$$R(P') = R(P) (q-1)^{n-m} = \frac{R(P)}{(q-1)^{m-n}}.$$
\end{Proposition}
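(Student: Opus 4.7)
The plan is to track how the two ingredients of the information rate, namely the dimension $k$ and the block length $N$, change under the embedding $P \mapsto P'$. The dimension depends only on the number of lattice points of the polytope, while the block length depends only on the ambient dimension of the polytope.

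First, I would observe that lattice equivalent polytopes have the same number of lattice points (noted in the Remark), so it suffices to count the lattice points of $P \times \{0\}^{m-n}$ in $\mathbb{Z}^m$. A point $(p_1, \ldots, p_n, 0, \ldots, 0)$ lies in $(P \times \{0\}^{m-n}) \cap \mathbb{Z}^m$ if and only if $(p_1, \ldots, p_n) \in P \cap \mathbb{Z}^n$, so the number of lattice points is exactly $k = |P \cap \mathbb{Z}^n|$. Therefore the dimension of $C_{P'}$ over $\mathbb{F}_q$ is also $k$.

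Next, since $P'$ is an integral convex polytope sitting in $\mathbb{R}^m$, the block length of $C_{P'}$ is $N' = (q-1)^m$ by definition, whereas the block length of $C_P$ is $N = (q-1)^n$. Taking the ratio yields
\begin{equation*}
R(P') \;=\; \frac{k}{(q-1)^m} \;=\; \frac{k}{(q-1)^n} \cdot (q-1)^{n-m} \;=\; R(P)\,(q-1)^{n-m},
\end{equation*}
which is the claimed identity. There is no real obstacle here; the statement is essentially a bookkeeping consequence of the fact that embedding into a higher-dimensional ambient space inflates the block length by a factor of $(q-1)^{m-n}$ without adding any new lattice points or new basis monomials to $\mathcal{L}_P$.
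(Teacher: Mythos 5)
Your proof is correct and follows the same route as the paper's, which simply cites the fact that lattice equivalent polytopes have the same number of lattice points together with the definition of information rate; you have merely written out the bookkeeping explicitly. No issues.
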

\begin{proof}
This follows from the fact that lattice equivalent polytopes 
have the same number of lattice points and
the definition of information rate.
\end{proof}

Proposition \ref{prop:inforateEmbedd} shows that to maximize the information rate
for a given polytope, we should embed it into the smallest 
dimension possible. Moreover, since the relative minimum 
distance does not depend on the embedding, by Proposition
\ref{delta dimension invariant}, then in terms of both of our
relevant parameters, the smaller dimension of the embedding,
the better. 

Finally, we note one further property of $\delta$ that we will need later.

\begin{Proposition}
\label{deltasubset}
Let $P$ and $Q$ be integral convex polytopes with $P \subseteq Q \subseteq [0,q-2]^n$. Then $\delta(P) \geq \delta(Q)$.
\end{Proposition}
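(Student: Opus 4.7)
The plan is to use the formula given for the minimum distance in terms of the zero set:
\[
d(C_P) = (q-1)^n - \max_{0 \neq f \in \mathcal{L}_P} |Z(f)|,
\]
and reduce the claim to a simple containment of the spaces of polynomials associated with $P$ and $Q$.

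First I would observe that since $P \subseteq Q$ and both are integral convex polytopes, we have $P \cap \mathbb{Z}^n \subseteq Q \cap \mathbb{Z}^n$. By the definition
\[
\mathcal{L}_P = \mathrm{span}_{\mathbb{F}_q}\{\mathbf{x}^p : p \in P \cap \mathbb{Z}^n\},
\]
this immediately yields $\mathcal{L}_P \subseteq \mathcal{L}_Q$. Consequently the maximum of $|Z(f)|$ taken over nonzero $f \in \mathcal{L}_P$ is taken over a smaller set than the maximum over nonzero $f \in \mathcal{L}_Q$, so
\[
N(P) = \max_{0 \neq f \in \mathcal{L}_P} |Z(f)| \;\leq\; \max_{0 \neq f \in \mathcal{L}_Q} |Z(f)| = N(Q).
\]

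Since $P$ and $Q$ live in the same ambient space $\mathbb{R}^n$, the codes $C_P$ and $C_Q$ share the same block length $N = (q-1)^n$. Therefore
\[
d(C_P) = (q-1)^n - N(P) \;\geq\; (q-1)^n - N(Q) = d(C_Q),
\]
and dividing by the common block length gives $\delta(P) \geq \delta(Q)$, as required.

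There is no real obstacle here; the proof is a one-line argument once one notes the containment $\mathcal{L}_P \subseteq \mathcal{L}_Q$, and the only thing worth being careful about is that $P$ and $Q$ are assumed to sit in the same $\mathbb{R}^n$, so that the block lengths match and no appeal to Proposition \ref{delta dimension invariant} is needed.
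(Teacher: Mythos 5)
Your proof is correct and is essentially the argument the paper gives: both rest on the containment $\mathcal{L}_P \subseteq \mathcal{L}_Q$ forcing $N(P) \leq N(Q)$, and then dividing by the common block length $(q-1)^n$. The paper merely phrases the inequality by exhibiting a maximizing $f \in \mathcal{L}_P$ as a witness inside $\mathcal{L}_Q$, which is the same observation in slightly different words.
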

\begin{proof}
With respect to 
the field $\bb F_q$, let $d(C_P)$ 
and $d(C_Q)$ denote the  respective minimum distances. By definition, there exists a polynomial 
$f \in \mathcal{L}_P \subseteq \bb F_q[x_1,\ldots, x_n]$ such that 
$$d(C_P) = (q-1)^n - \abs{Z(f)}$$
Now, since $P \subseteq Q$ it follows that $\mathcal{L}_P \subseteq \mathcal{L}_Q$ so that $f \in \mathcal{L}_Q$. Thus by definition, 
$$d(C_Q) = (q-1)^n - \max_{g \in \mathcal{L}_Q}\abs{Z(g)}  \leq (q-1)^n - \abs{Z(f)} = d(C_P).$$
We then compute $\delta$: 
$$\delta(Q) = \frac{d(C_Q)}{(q-1)^n} \leq \frac{d(C_P)}{(q-1)^n} = \delta(P).$$
\end{proof}

Various authors have computed the minimum distance of toric codes arising from special polytopes.  We record formulas for the minimum distance of toric codes defined by simplices and boxes, given in  \cite{LSchwarz}, and a generalization to the Cartesian product, given in \cite{SS2}.

\begin{Theorem}\cite{LSchwarz}*{Corollary 2} \label{thm:mindistSimplex}
Let $\ell < q-1$ and $P_\ell(n)$ be an $n$-dimensional simplex with side length $\ell$. Then the minimum distance of the toric code $C_{P_\ell(n)}$ is
\begin{equation*}
    d(C_{P_\ell(n)}) = (q-1)^n - \ell(q-1)^{n-1}.
\end{equation*}
\end{Theorem}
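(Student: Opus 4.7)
The plan is to compute $N(P_\ell(n)) = \max_{0 \neq f \in \mathcal{L}_{P_\ell(n)}} \abs{Z(f)}$ and show it equals $\ell(q-1)^{n-1}$; the formula for $d(C_{P_\ell(n)})$ then follows immediately from the defining identity $d(C_P) = (q-1)^n - N(P)$. The first step is to note that, up to lattice equivalence, the lattice points of the standard $n$-simplex of side length $\ell$ are exactly $\{(p_1, \ldots, p_n) \in \bb{Z}_{\geq 0}^n : p_1 + \cdots + p_n \leq \ell\}$, so $\mathcal{L}_{P_\ell(n)}$ is precisely the space of polynomials in $\bb{F}_q[x_1, \ldots, x_n]$ of total degree at most $\ell$.

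For the upper bound $N(P_\ell(n)) \leq \ell(q-1)^{n-1}$, I would prove by induction on $n$ the Schwartz--Zippel style statement that any nonzero polynomial $f$ of total degree $\leq \ell$ has at most $\ell(q-1)^{n-1}$ zeros in $(\bb{F}_q^\times)^n$. The base case $n = 1$ is the standard fact that a nonzero univariate polynomial of degree $\leq \ell$ has at most $\ell$ roots. For the inductive step, write $f = \sum_{i=0}^k g_i(x_2, \ldots, x_n)\, x_1^i$ with $g_k \neq 0$, so $\deg g_k \leq \ell - k$. Points $(a_2, \ldots, a_n) \in (\bb{F}_q^\times)^{n-1}$ at which $g_k$ vanishes contribute at most $(\ell-k)(q-1)^{n-2} \cdot (q-1)$ zeros to $f$ by the inductive hypothesis, while the remaining points yield a nonzero degree-$k$ univariate polynomial in $x_1$, each contributing at most $k$ zeros and giving at most $k(q-1)^{n-1}$ zeros in total. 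Summing yields $\ell(q-1)^{n-1}$.

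For the lower bound I would exhibit a polynomial realizing this count. Since $\ell < q - 1$, one can pick distinct $a_1, \ldots, a_\ell \in \bb{F}_q^\times$ and set $f(x_1, \ldots, x_n) = \prod_{i=1}^\ell (x_1 - a_i)$. This polynomial has total degree $\ell$, so $f \in \mathcal{L}_{P_\ell(n)}$, and its zero set in $(\bb{F}_q^\times)^n$ is exactly $\{a_1, \ldots, a_\ell\} \times (\bb{F}_q^\times)^{n-1}$, which has cardinality $\ell(q-1)^{n-1}$.

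The main technical obstacle is the inductive Schwartz--Zippel bookkeeping and the careful use of the hypothesis $\ell < q - 1$: it guarantees that at each inductive step $\ell - k < q - 1$ so that the bound furnished by the hypothesis does not exceed the ambient $(q-1)^{n-1}$ and hence actually constrains $\abs{Z(f)}$, and it also underpins the existence of $\ell$ distinct elements $a_i \in \bb{F}_q^\times$ in the lower bound construction (note that for $\ell \geq q - 1$ the polynomial $x_1^{q-1} - 1 \in \mathcal{L}_{P_\ell(n)}$ vanishes on all of $(\bb{F}_q^\times)^n$, forcing the minimum distance to zero). Combining the two bounds gives $N(P_\ell(n)) = \ell(q-1)^{n-1}$ and hence $d(C_{P_\ell(n)}) = (q-1)^n - \ell(q-1)^{n-1}$, as claimed.
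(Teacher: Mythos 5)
Your argument is correct. The paper itself does not prove this statement---it is imported from Little and Schwarz \cite{LSchwarz}*{Corollary 2}, whose proof (as the paper notes in Example \ref{dirsimpex}) proceeds via Vandermonde matrices: one shows that a suitable square submatrix of the generator matrix is nonsingular, which bounds the number of zero coordinates of a codeword. Your route is genuinely different and more elementary: you identify $\mathcal{L}_{P_\ell(n)}$ with the polynomials of total degree at most $\ell$ and prove the footprint/Schwartz--Zippel bound $\abs{Z(f)} \leq \ell(q-1)^{n-1}$ over $(\bb F_q^\times)^n$ by induction on $n$, splitting on whether the leading coefficient $g_k$ of $f$ in $x_1$ vanishes; the two contributions $(\ell-k)(q-1)^{n-1}$ and $k(q-1)^{n-1}$ sum exactly to $\ell(q-1)^{n-1}$, and the product $\prod_{i=1}^{\ell}(x_1-a_i)$ attains the bound. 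The bookkeeping checks out, including the degenerate cases $k=0$ and $\ell=k$, and you correctly locate where $\ell < q-1$ is used (nontriviality of the bound and availability of $\ell$ distinct elements of $\bb F_q^\times$). What your approach buys is a self-contained, purely combinatorial proof that also makes transparent why the extremal polynomial is a product of linear factors in a single variable---which is the same phenomenon the paper exploits later via Minkowski length; what the Vandermonde approach buys is a method that generalizes to other polytopes (e.g.\ the rectangular boxes of Theorem \ref{thm:mindistBox}) where a single induction on degree is less natural.
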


\begin{Theorem}\cite{LSchwarz}*{Theorem 3} \label{thm:mindistBox}
Let $P_{\ell_1, \ldots ,\ell_n}$ be the $\ell_1 \times \ell_2 \times \ldots \times \ell_n$ rectangular polytope $[0,\ell_1] \times \cdots \times [0,\ell_n] \subseteq \bb{R}^n$. Let $\ell_1, \ldots ,\ell_n$ be small enough so that $P_{\ell_1, \ldots ,\ell_n} \subseteq [0,q-2]^n \subseteq \bb{R}^n$. Then the minimum distance of the toric code $C_{P_{\ell_1, \ldots ,\ell_n}}$ is
\begin{equation*}
    d(C_{P_{\ell_1, \ldots ,\ell_n}}) =  \prod_{i=1}^{n} ((q-1)-\ell_i).
\end{equation*}
\end{Theorem}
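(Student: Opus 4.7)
My plan is to establish the equivalent statement
\[
  N(P_{\ell_1,\ldots,\ell_n}) = (q-1)^n - \prod_{i=1}^n\bigl((q-1)-\ell_i\bigr),
\]
where $N(P)=\max_{0\neq f\in\mathcal{L}_P}|Z(f)|$, by proving matching upper and lower bounds. The key structural observation is that every nonzero $f\in\mathcal{L}_{P_{\ell_1,\ldots,\ell_n}}$ has degree at most $\ell_i$ in each variable $x_i$ individually, not merely total degree at most $\sum\ell_i$; this is what makes a variable-by-variable induction feasible.

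For the upper bound I would induct on $n$. The base case $n=1$ is just that a nonzero polynomial of degree $\le\ell_1$ has at most $\ell_1$ roots in $\mathbb{F}_q^\times$. For the inductive step, write
\[
  f(x_1,\ldots,x_n) = \sum_{k=0}^{\ell_n} f_k(x_1,\ldots,x_{n-1})\, x_n^k
\]
and partition $(\mathbb{F}_q^\times)^{n-1}$ into the common zero locus $A=\bigcap_k Z(f_k)$ and its complement. On fibers over $A$ the polynomial $f$ vanishes identically, contributing $|A|(q-1)$ zeros; over any $a\notin A$ the univariate restriction $f(a,x_n)$ is a nonzero polynomial of degree $\le\ell_n$ and contributes at most $\ell_n$ zeros. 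Any nonzero coefficient $f_k$ lies in $\mathcal{L}_{P_{\ell_1,\ldots,\ell_{n-1}}}$, so the inductive hypothesis gives $|A|\le (q-1)^{n-1}-\prod_{i=1}^{n-1}((q-1)-\ell_i)$. Combining,
\[
  |Z(f)| \le |A|\bigl((q-1)-\ell_n\bigr) + (q-1)^{n-1}\ell_n,
\]
and substituting the bound on $|A|$ (valid because $(q-1)-\ell_n\ge 1$) the $(q-1)^{n-1}\ell_n$ terms telescope, leaving $|Z(f)|\le (q-1)^n-\prod_{i=1}^n((q-1)-\ell_i)$, as desired.

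For the matching lower bound I would exhibit an explicit extremizer: choose subsets $S_i\subseteq\mathbb{F}_q^\times$ with $|S_i|=\ell_i$ (possible since $\ell_i\le q-2$) and set
\[
  f(x_1,\ldots,x_n) = \prod_{i=1}^n \prod_{a\in S_i}(x_i - a) \;\in\; \mathcal{L}_{P_{\ell_1,\ldots,\ell_n}}.
\]
A point $(b_1,\ldots,b_n)\in(\mathbb{F}_q^\times)^n$ evaluates nonzero precisely when $b_i\notin S_i$ for every $i$, so $|Z(f)|=(q-1)^n-\prod_{i=1}^n((q-1)-\ell_i)$, matching the upper bound. The main obstacle is the bookkeeping in the inductive step: one must isolate a nonzero coefficient $f_k$ whose multidegree genuinely respects $(\ell_1,\ldots,\ell_{n-1})$ so that the inductive hypothesis applies to it, and one must verify that the algebraic simplification using the bound on $|A|$ collapses cleanly to the product form; by contrast, the achievability direction via a product of linear factors is essentially immediate.
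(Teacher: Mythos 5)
Your proof is correct, but note that the paper does not prove this statement at all: it is quoted from Little and Schwarz, and the proof referenced there (as the introduction of this paper notes) proceeds via Vandermonde matrices, exploiting the fact that the code of a box is a tensor product of Reed--Solomon codes and locating minimum-weight words through the structure of the generator matrix. Your argument is a genuinely different and more elementary route: a direct induction on the number of variables, fibering $(\mathbb{F}_q^\times)^n$ over $(\mathbb{F}_q^\times)^{n-1}$, splitting into the common zero locus $A$ of the coefficients $f_k$ and its complement, and using monotonicity in $|A|$ (valid since $(q-1)-\ell_n\ge 1$) to reduce to the $(n-1)$-variable bound; the cancellation you describe does go through and yields exactly $(q-1)^n-\prod_i((q-1)-\ell_i)$, and your explicit product of linear factors shows the bound is attained. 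The one point worth stating carefully is the base case: a nonzero $f$ of degree $\ell_1\le q-2$ is not forced to vanish identically on $\mathbb{F}_q^\times$, so it has at most $\ell_1$ roots there. What your approach buys is self-containedness and consistency with the fiber-counting style used elsewhere in this paper (e.g., the proofs for the join and the direct sum); what the Vandermonde approach buys is a structural description of the minimum-weight codewords and a uniform treatment that also covers simplices (Theorem 2 of the same reference), which your variable-by-variable induction does not immediately give.
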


\begin{Theorem}\cite{SS2}*{Theorem 2.1} \label{thm:mindistCartesianProduct}
Let $P \subseteq [0,q-2]^n$ and $Q \subseteq [0,q-2]^m$  be integral convex polytopes.  Then
\[d(C_{P\times Q}) = d(C_P)d(C_Q).\]
\end{Theorem}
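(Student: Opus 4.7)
The plan is to establish the equality via matching upper and lower bounds on $d(C_{P\times Q})$. The key structural observation is that, since the lattice points of $P \times Q$ are exactly the pairs $(u,v)$ with $u \in P \cap \bb{Z}^n$ and $v \in Q \cap \bb{Z}^m$, every $h \in \mathcal{L}_{P \times Q}$ admits a ``bilinear'' decomposition
\[
h(\mathbf{x}, \mathbf{y}) \;=\; \sum_{v \in Q \cap \bb{Z}^m} \mathbf{y}^{v}\,\phi_v(\mathbf{x}), \qquad \phi_v \in \mathcal{L}_P,
\]
and symmetrically $h = \sum_{u \in P \cap \bb{Z}^n} \mathbf{x}^u \psi_u(\mathbf{y})$ with $\psi_u \in \mathcal{L}_Q$. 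Both decompositions will be used.

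For the upper bound, I would pick $f \in \mathcal{L}_P$ and $g \in \mathcal{L}_Q$ realizing $N(P)$ and $N(Q)$, and set $h = f(\mathbf{x})g(\mathbf{y}) \in \mathcal{L}_{P \times Q}$. The zero locus of $h$ on $(\bb{F}_q^\times)^{n+m}$ is $\bigl(Z(f)\times(\bb{F}_q^\times)^m\bigr)\cup \bigl((\bb{F}_q^\times)^n\times Z(g)\bigr)$, and inclusion-exclusion gives
\[
|Z(h)| \;=\; N(P)(q-1)^m + N(Q)(q-1)^n - N(P)N(Q).
\]
Subtracting from $(q-1)^{n+m}$ and factoring yields $d(C_{P\times Q}) \leq \bigl((q-1)^n-N(P)\bigr)\bigl((q-1)^m-N(Q)\bigr) = d(C_P)\,d(C_Q)$.

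For the lower bound, I would take an arbitrary nonzero $h \in \mathcal{L}_{P \times Q}$ and choose some $v^*$ with $\phi_{v^*} \not\equiv 0$. Since $\phi_{v^*}$ is a nonzero element of $\mathcal{L}_P$, it is nonvanishing on a set $A \subseteq (\bb{F}_q^\times)^n$ of cardinality at least $d(C_P)$. For each $\mathbf{a} \in A$, the specialization
\[
h(\mathbf{a}, \mathbf{y}) \;=\; \sum_{v} \phi_v(\mathbf{a})\,\mathbf{y}^{v}
\]
lies in $\mathcal{L}_Q$ and is nonzero, because its $\mathbf{y}^{v^*}$-coefficient equals $\phi_{v^*}(\mathbf{a}) \neq 0$; consequently $h(\mathbf{a},\mathbf{y})$ is nonzero at at least $d(C_Q)$ points of $(\bb{F}_q^\times)^m$. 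Summing over $\mathbf{a} \in A$, the codeword $\varepsilon(h)$ has weight at least $d(C_P)\,d(C_Q)$, so every nonzero codeword has weight $\geq d(C_P)\,d(C_Q)$, matching the upper bound.

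I do not anticipate a substantive obstacle. The two points requiring minor care are (i) that a formally nonzero element of $\mathcal{L}_P$ is genuinely nonvanishing on at least $d(C_P)$ points of $(\bb{F}_q^\times)^n$, which follows from the injectivity of $\varepsilon$ built into the setup, and (ii) that the $\phi_v$ and $\psi_u$ really do land in $\mathcal{L}_P$ and $\mathcal{L}_Q$, which is immediate from the product structure of the lattice points of $P \times Q$.
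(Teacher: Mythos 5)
Your proof is correct: the upper bound via the product polynomial $h=fg$ with inclusion--exclusion, and the lower bound via specializing at the points where a nonzero coefficient polynomial $\phi_{v^*}\in\mathcal{L}_P$ does not vanish, together give the equality, and the appeal to injectivity of $\varepsilon$ (i.e., $\dim C_P=|P\cap\bb{Z}^n|$ for $P\subseteq[0,q-2]^n$) is exactly the point that needs care and is handled. The paper itself states this result without proof, citing \cite{SS2}*{Theorem 2.1}, and your argument is essentially the standard one given in that reference, so there is nothing further to reconcile.
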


\section{Infinite Families of Toric Codes} \label{infinite families}

We next investigate infinite families of toric codes, following Soprunov and Soprunova \cite{SS2}.   

\begin{Definition}
    Let $\{P_i\}$ be a sequence of non-empty  integral convex
    polytopes such that each $P_i \subseteq [0,q-2]^{n_i}$ and $n_i \to \infty$ as $i \to \infty$. Let $C_{P_i}$ be the toric code associated with $P_i$.
    We say $\{C_{P_i}\}$ is an \newword{infinite family of toric codes}.
    \end{Definition}

    Since every polytope corresponds to only one toric code, sometimes we will refer to $\{P_i\}$ as the
    infinite family of toric codes.

    \begin{Definition}

     A \newword{good infinite family of toric codes} is an infinite family of toric codes, $\{P_i\}$, such that $\delta(P_i) \to \delta$ and $R(P_i) \to R$ as $n_i \to \infty,$ where $\delta, R \in (0,1).$ In other words, an infinite family of toric codes is good if $\delta(P_i)$ and $R(P_i)$ approach positive constants as the dimension of the polytopes approaches infinity.
 \end{Definition}
 
In \cite{SS2}, Soprunov and Soprunova construct various infinite families of toric codes; none of which are good families, and remark that it would be interesting to find such a good family.  
    
    
\subsection{Polytope Operations} With the intent of constructing a good infinite family of codes, we derive formulas for the relative minimum distance and information rate of codes corresponding to polytopes constructed using the join and direct sum operations.

\subsubsection{The Join}


\begin{Definition}
Let $P \subseteq \bb{R}^n$ and $Q \subseteq \bb{R}^m$ be integral convex polytopes. The \newword{join} of $P$ and $Q$, denoted by $P*Q$, is defined as $$P*Q = \mathrm{conv}(\{(p,\mathbf{0}^m,0) \, | \, p \in P\} \cup \{(\mathbf{0}^n,q,1) \, | \, q \in Q\}) \subseteq \bb{R}^{n+m+1}$$
\end{Definition}

\begin{Example}
Let $P =[0,2] \subseteq \bb{R}$ and $Q = [0,3] \subseteq \bb{R}$ be line segments illustrated below:
\begin{figure}[H]
    \centering
    \subfigure[]{\begin{tikzpicture}[scale=1]
    \draw [semithick, ->, >=latex] (0,0) -- (3.5,0);
    \draw [semithick, ->, >=latex] (0,0) -- (0,3.5);
    \node [below left] at (0,-0.12) {$1$};
    \node [below left] at (1,-0.12) {$x$};
    \node [below left] at (2,-0.02) {$x^2$};
    \draw [semithick, black] (0,0) -- (2,0) -- cycle;
    \draw [fill] (0,0) circle [radius=0.06];
    \draw [fill] (1,0) circle [radius=0.06];
    \draw [fill] (2,0) circle [radius=0.06];

    \end{tikzpicture}}
    \qquad
    \subfigure[]{\begin{tikzpicture}[scale=1]
    \draw [semithick, ->, >=latex] (0,0) -- (3.5,0);
    \draw [semithick, ->, >=latex] (0,0) -- (0,3.5);
    \node [below left] at (0,-0.12) {$1$};
    \node [below left] at (1,-0.12) {$x$};
    \node [below left] at (2,-0.02) {$x^2$};
    \node [below left] at (3,-0.02) {$x^3$};

    \draw [semithick, black] (0,0) -- (3,0) -- cycle;
    \draw [fill] (0,0) circle [radius=0.06];
    \draw [fill] (1,0) circle [radius=0.06];
    \draw [fill] (2,0) circle [radius=0.06];
    \draw [fill] (3,0) circle [radius=0.06];

    \end{tikzpicture}}
   
    \caption{(a) Polytope $P$ \qquad (b) Polytope $Q$}
    \label{fig:line_segments}
\end{figure}
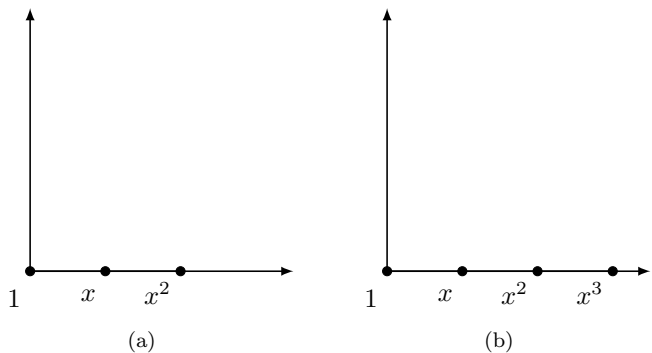
\noindent Then the join of $P$ and $Q$ is the convex hull  of the points $\{(0,0,0), (2,0,0), (0,0,1), (0,3,1)\}$
given in Figure \ref{fig:join} below.
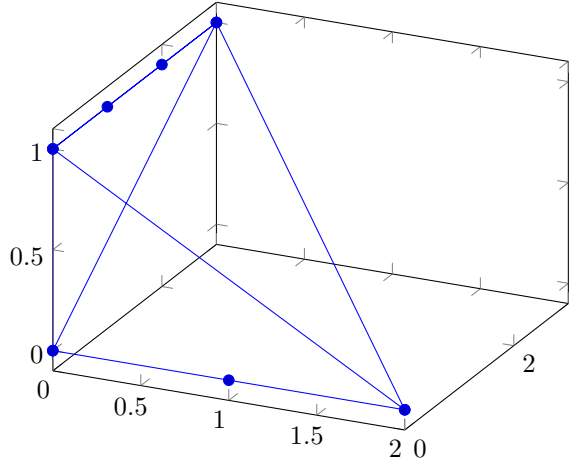
\begin{figure}[H]
\centering
\begin{tikzpicture}
 
\begin{axis}
 
\addplot3 coordinates 
{
    (0,0,0)
    (1,0,0)
    (2,0,0)
    (0,0,1)
    (0,0,0)
    (0,3,1)
    (0,2,1)
    (0,1,1)
    (0,0,1)
    (0,3,1)
    (2,0,0)
};
 
\end{axis}
 
\end{tikzpicture}
\caption{The Polytope $P * Q$}
\label{fig:join}
\end{figure}
In general, the integral lattice points of the join of $P$ and $Q$ are exactly the disjoint union of the integral lattice points of $P$ and the integral lattice points of $Q$. This is because the last coordinate $z$ satisfies $0 \le z \le 1$.  Hence every lattice point in the join has either $z = 0$ or $z = 1$.  When $z = 0$ (resp. $z = 1$), the lattice points are in one-to-one correspondence with the lattice points in $P$ (resp. $Q$).
\end{Example}


\begin{Proposition}
\label{join formula}
Fix a finite field $\bb{F}_q$ and let $P \subseteq [0,q-2]^n \subset \bb{R}^n$ and $Q \subseteq [0,q-2]^m\subset \bb{R}^m$ be integral convex polytopes. Then the maximum number of zeros of $h \in \mathcal{L}_{P*Q}$, is given by
\begin{eqnarray*}
N(P*Q)& = \max&\{(q-1)^{n+m}, N(P)(q-1)^{m+1}, N(Q)(q-1)^{n+1},\\
&& (q-1)N(P)N(Q) + ((q-1)^n-N(P))((q-1)^m-N(Q))\}
\end{eqnarray*}
and the relative minimum distance of $C_{P *Q}$ is
$$\delta(P*Q) = \min\left\{1 - \frac1{q-1}, \delta(P), \delta(Q),
\delta(P) + \delta(Q) - \delta(P)\delta(Q)\frac{q}{q-1}\right\}.$$
\end{Proposition}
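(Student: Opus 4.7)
The plan is first to give a clean description of $\mathcal{L}_{P*Q}$ and then reduce the computation of $N(P*Q)$ to a two-variable optimization. As observed in the example preceding the proposition, the lattice points of $P*Q$ form the disjoint union $\{(p,\mathbf{0}^m,0) : p \in P \cap \bb{Z}^n\} \sqcup \{(\mathbf{0}^n, q, 1) : q \in Q \cap \bb{Z}^m\}$, so the corresponding monomials split cleanly as $\mathbf{x}^p$ and $z\,\mathbf{y}^q$. Thus every $h \in \mathcal{L}_{P*Q}$ has a unique decomposition
\[
h(\mathbf{x},\mathbf{y},z) = f(\mathbf{x}) + z\,g(\mathbf{y}), \qquad f \in \mathcal{L}_P,\ g \in \mathcal{L}_Q.
\]

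Next I would count $|Z(h)|$ in $(\bb{F}_q^\times)^{n+m+1}$ by fixing $(a,b,c)$ and inspecting $h(a,b,c) = f(a) + c\,g(b)$. The key identity is that when $g(b) \neq 0$, the equation $f(a) + c\,g(b) = 0$ has a unique solution $c = -f(a)/g(b)$, which lies in $\bb{F}_q^\times$ iff $f(a) \neq 0$; when $g(b) = 0$, either all $c$ (if $f(a) = 0$) or no $c$ (if $f(a) \neq 0$) give a zero. Summing the four resulting contributions yields, when both $f$ and $g$ are nonzero,
\[
|Z(h)| = (q-1)|Z(f)||Z(g)| + \bigl((q-1)^n - |Z(f)|\bigr)\bigl((q-1)^m - |Z(g)|\bigr),
\]
while the degenerate cases $f \equiv 0$ (with $g \not\equiv 0$) and $g \equiv 0$ (with $f \not\equiv 0$) yield $(q-1)^{n+1}|Z(g)|$ and $(q-1)^{m+1}|Z(f)|$ respectively.

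The displayed expression is affine in each of $|Z(f)|$ and $|Z(g)|$, so its maximum over $[0,N(P)] \times [0,N(Q)]$ is attained at a corner. The two mixed corners $(N(P),0)$ and $(0,N(Q))$ are each dominated by $(0,0)$, which evaluates to $(q-1)^{n+m}$; this value, together with the corner $(N(P),N(Q))$ and the two degenerate-case maxima, gives exactly the four candidates in the stated formula for $N(P*Q)$. Achievability of the corners is immediate: $|Z(f)| = N(P)$ is the definition of $N(P)$, and $|Z(f)| = 0$ is realized by any monomial $\mathbf{x}^p$ with $p \in P \cap \bb{Z}^n$ (since monomials have no zeros on $(\bb{F}_q^\times)^n$); likewise for $g$.

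Finally, I would convert the $N(P*Q)$ formula into the $\delta(P*Q)$ formula using $\delta(P*Q) = 1 - N(P*Q)/(q-1)^{n+m+1}$, so that the $\max$ becomes a $\min$. Writing $N(P) = (q-1)^n(1-\delta(P))$ and $N(Q) = (q-1)^m(1-\delta(Q))$, the first three candidates collapse to $1 - \tfrac{1}{q-1}$, $\delta(P)$, and $\delta(Q)$; the fourth simplifies, after expanding $(1-\delta(P))(1-\delta(Q)) + \tfrac{1}{q-1}\delta(P)\delta(Q)$ and subtracting from $1$, to $\delta(P) + \delta(Q) - \delta(P)\delta(Q)\cdot\tfrac{q}{q-1}$. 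The main technical obstacle is the zero-counting case analysis in the second step: one must carefully partition $(\bb{F}_q^\times)^n \times (\bb{F}_q^\times)^m$ by the vanishing of $f$ and $g$ and remember to treat $f \equiv 0$ and $g \equiv 0$ as separate regimes, after which everything else is careful algebraic bookkeeping.
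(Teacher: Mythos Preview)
Your proposal is correct and follows the same overall route as the paper: decompose $h=f+zg$, count zeros by cases on the vanishing of $f$ and $g$, optimize the resulting expression over the rectangle $[0,N(P)]\times[0,N(Q)]$, treat the degenerate cases $f\equiv 0$ and $g\equiv 0$ separately, then convert to $\delta$. The one substantive difference is in the optimization step: the paper computes partial derivatives, identifies a saddle point via the second-derivative test, and then argues somewhat informally about signs of the partials to locate the boundary maximum; you instead observe that the expression is affine in each variable separately, so the maximum on a rectangle is automatically at a corner, and then dispose of the two mixed corners by direct comparison with $(0,0)$. Your argument is shorter and more elementary, and it also makes the achievability of the corner $(0,0)$ explicit (via monomials), a point the paper glosses over.
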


\begin{proof}
            Every polynomial $h \in \mathcal{L}_{P*Q}$ has the form $f + x_{n+m+1}g$ for some $f \in \mathcal{L}_P$ and $g \in \mathcal{L}_Q,$ so we can count the number of $\bb{F}_q^\times$-zeros  in $(\bb{F}_q^{\times})^{n+m+1}.$ If both $f$ and $g$ evaluate to zero, then $h$ evaluates to zero at all $(q-1)$ choices for $x_{n+m+1},$ and there are $|Z(f)|$ points in $(\bb{F}_q^{\times})^{n}$ such that $f$ evaluates to zero, and similarly, there are $|Z(g)|$ points in $(\bb{F}_q^{\times})^{m}$ such that $g$ evaluates to zero, so there are $(q-1)|Z(f)||Z(g)|$ points in $(\bb{F}_q^{\times})^{n+m+1}$ such that $h$ evaluates to zero. Additionally, if neither $f$ nor $g$ evaluates to zero, then there is one choice for $x_{n+m+1}$ such that $h$ evaluates to zero, since there is one value for $x_{n+m+1}$ such that $x_{n+m+1}g$ is equal to the additive inverse of $f.$ Moreover, we know that there are $(q-1)^n-|Z(f)|$ points in $(\bb{F}_q^{\times})^{n}$ where $f$ does not evaluate to zero, and similarly there are $(q-1)^m-|Z(g)|$ points in $(\bb{F}_q^{\times})^{m}$ where $g$ does not evaluate to zero, so there are $(1)((q-1)^n-|Z(f)|)((q-1)^m-|Z(g)|)$ additional points in $(\bb{F}_q^{\times})^{n+m+1}$ such that $h$ evaluates to zero. Therefore, we have that for $h = f + x_{n+m+1}g \in \mathcal{L}_{P*Q},$ $$|Z(h)| = (q-1)|Z(f)||Z(g)| + ((q-1)^n - |Z(f)|)((q-1)^m - |Z(g)|))$$ In order to find $N(P*Q),$ we need to determine what values of $|Z(f)|$ and $|Z(g)|$ maximize $|Z(h)|.$ 

Regarding $|Z(f)|$ and $|Z(g)|$ as real variables $x$ and $y$ respectively, and $|Z(h)|$ as a function of these two variables, $F(x,y),$ we use calculus to determine the maximum value of $F(x,y) = (q-1)xy + ((q-1)^n - x)((q-1)^m - y)).$ Taking the partials of this equation, we have $$F_x = (q-1)y - (q-1)^m + y \, \text{ and } \, 
F_y = (q-1)x - (q-1)^n + x. $$
We find that $F_x = 0$ when $y = \frac{(q-1)^m}{q}$ and $F_y = 0$ when $x = \frac{(q-1)^n}{q}.$ 
Computing second partial derivatives, we have:
$$F_{{xx}} = 0, \hspace{.3in}
F_{{yy}} = 0, \hspace{.3in}
F_{{xy}} = q.
$$
Hence $(F_{{xx}})(F_{{yy}}) - (F_{{xy}})^2 = 0 - q^2 < 0$ so that by the second partial derivative test, $(\frac{(q-1)^n}{q},\frac{(q-1)^m}{q})$ is a saddle point of $F.$ Note however that $x \in [0, N(P)]$ and $y \in [0,N(Q)],$ and $F$ is continuous on this set $[0,N(P)] \times [0,N(Q)],$ so $F$ attains its maximum value at some point in the set, and we know that since it is not at a critical point, it occurs on the boundary.

Next, we will show that the maximum value must occur at either the point $(0,0)$ or the point $(N(P),N(Q)).$ Note that $F_x > 0$ when $y > \frac{(q-1)^m}{q}$ and $F_y > 0$ when $x > \frac{(q-1)^n}{q}.$ From this, we have that if $x - \frac{(q-1)^n}{q}$ and $y - \frac{(q-1)^m}{q}$ do not have the same sign, then $F$ will not be at its maximum. Additionally, if $x = N(P)$ and $y \neq N(Q),$ $(\mathrm{and} \,\, y > \frac{(q-1)^m}{q})$ then $F$ is not at its maximum, since $F$ will increase as $y$ increases. In the same way, if $x = 0$ and $y \neq 0,$ $(\mathrm{and} \,\, |Z(g)| < \frac{(q-1)^m}{q})$ then $F$ is not at its maximum, since $F$ will increase as $y$ decreases. A similar analysis holds for when $y = 0$ and $x \neq 0$ and when $y = N(Q)$ and $x \neq N(P).$ Therefore, $F$ obtains its maximum value at either $(0,0)$ or $(N(P),N(Q)),$ so $$N(P*Q) = \mathrm{max}\{(q-1)^n(q-1)^m, (q-1)N(P)N(Q) + ((q-1)^n-N(P))((q-1)^m-N(Q)))\}.$$ Note that when $f$ is identically zero, then $N(P)=(q-1)^n$ so $$(q-1)N(P)N(Q) + ((q-1)^n-N(P))((q-1)^m-N(Q))) = (q-1)(q-1)^nN(Q) + (0)((q-1)^m-N(Q))) = (q-1)^{n+1}N(Q).$$ Similarly, when $g$ is identically zero, then $N(Q) = (q-1)^m$ so $$(q-1)N(P)N(Q) + ((q-1)^n-N(P))((q-1)^m-N(Q))) = (q-1)N(P)(q-1)^m + ((q-1)^n-N(P))(0) = (q-1)^{m+1}N(P).$$ We include these cases for completeness, so we have that \begin{eqnarray*}
N(P*Q)& = \max&\{(q-1)^{n+m}, N(P)(q-1)^{m+1}, N(Q)(q-1)^{n+1},\\
&& (q-1)N(P)N(Q) + ((q-1)^n-N(P))((q-1)^m-N(Q))\}.
\end{eqnarray*} 
Since $\displaystyle \delta(P*Q) = \frac{(q-1)^{n+m+1}-N(P*Q)}{(q-1)^{n+m+1}}$, we have
$$\delta(P*Q) = \min\left\{1 - \frac1{q-1}, \delta(P), \delta(Q),
\delta(P) + \delta(Q) - \delta(P)\delta(Q)\left( \frac{q}{q-1}\right)\right\}.$$
\end{proof}

\begin{Proposition}
\label{join saddlepoint}
Let $P \subseteq [0,q-2]^n$ and $Q \subseteq [0,q-2]^m,$ and let $N(P)$ and $N(Q)$ be as above. If $N(P) \geq 2\frac{(q-1)^n}q$ and $N(Q) \geq 2\frac{(q-1)^m}q$ then
$$(q-1)^{n+m} \leq  (q-1)N(P)N(Q) + ((q-1)^n - N(P))((q-1)^m - N(Q))$$
\end{Proposition}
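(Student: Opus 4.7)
The plan is to reduce the inequality to something transparent via algebra. Set $A = (q-1)^n$, $B = (q-1)^m$, $x = N(P)$, $y = N(Q)$, so the hypotheses become $x \geq 2A/q$ and $y \geq 2B/q$, and the desired inequality reads
\[
AB \leq (q-1)xy + (A-x)(B-y).
\]
Expanding the right-hand side yields $AB + qxy - Bx - Ay$, so after cancelling $AB$ the inequality is equivalent to
\[
Bx + Ay \leq qxy.
\]

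Since the hypotheses force $x,y > 0$ (note $A, B \geq 1$ and $q \geq 2$), I can divide through by $xy$ to obtain the cleaner equivalent form
\[
\frac{A}{x} + \frac{B}{y} \leq q.
\]
Now I simply apply the two hypotheses term-by-term: $x \geq 2A/q$ gives $A/x \leq q/2$, and $y \geq 2B/q$ gives $B/y \leq q/2$, so their sum is at most $q$, as required.

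There is no real obstacle here; the whole argument is an algebraic reformulation followed by the observation that the bound is additively separable once one divides by $xy$. The only point worth highlighting is why the threshold $2(q-1)^n/q$ is exactly the right one: the two constraints each save a factor of $1/2$ on their respective reciprocals, and splitting $q$ as $q/2 + q/2$ is precisely what makes the bound tight. Equality is attained at the corner $x = 2A/q$, $y = 2B/q$, which also explains why this is the natural hypothesis to impose.
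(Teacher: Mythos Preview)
Your proof is correct, and it takes a genuinely different route from the paper's argument. The paper proceeds by invoking the partial-derivative analysis of $F(x,y) = (q-1)xy + ((q-1)^n - x)((q-1)^m - y)$ established in the preceding proposition: since both partials are nonnegative in the region $x \geq (q-1)^n/q$, $y \geq (q-1)^m/q$, the function $F$ is monotone there, and so $F(N(P),N(Q)) \geq F\bigl(2(q-1)^n/q,\,2(q-1)^m/q\bigr)$, which a direct computation shows equals $(q-1)^{n+m}$ exactly. Your approach bypasses the calculus entirely: after the expansion, the inequality becomes $Bx + Ay \leq qxy$, and dividing by $xy$ separates it into two independent bounds that the hypotheses supply immediately. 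Your argument is shorter and self-contained, while the paper's has the advantage of reusing the analysis already done for Proposition~\ref{join formula} and making the geometric role of the saddle point more visible. Both arrive at the same tight threshold for the same reason, as your closing remark about the $q/2 + q/2$ split correctly identifies.
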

\begin{proof}
Suppose $N(P) \geq 2\frac{(q-1)^n}q$ and $N(Q) \geq 2\frac{(q-1)^m}q$ and 
let $F : \bb R^2 \to \bb R$ by
$$F(x,y) = (q-1)xy + ((q-1)^n - x)((q-1)^m - y).$$
We wish to show that
$$(q-1)^n(q-1)^m \leq F(N(P),N(Q)).$$
By the analysis in the proof of Proposition \ref{join formula} we know that 
$F(x,y)$ is monotonically increasing with both $x$ and $y$ when both
$x \geq \frac{(q-1)^n}q$ and $y \geq \frac{(q-1)^m}q$ since both partial derivatives are 
non-negative in this region.

In particular, we have since $q \geq 1$
$$N(P) \geq 2 \frac{(q-1)^n}q \geq \frac{(q-1)^n}q$$
and similarly for $N(Q)$. Thus,

\begin{align*}
F(N(P), N(Q)) &\geq F\left(2 \frac{(q-1)^n}q, 2 \frac{(q-1)^m}q\right)\\
&= (q-1)\frac{2(q-1)^n}q\frac{2(q-1)^m}q \\
&\qquad + \left((q-1)^n - \frac{2(q-1)^n}q\right)\left((q-1)^m - \frac{2(q-1)^m}q\right)\\
&= (q-1)^{n+m}\Big[(q-1)(2/q)^2 + (1-2/q)^2\Big]\\
&= (q-1)^{n+m}\Big[4/q - 4/q^2 + 1 - 4/q + 4/q^2\Big]\\
&= (q-1)^{n+m}.
\end{align*}
\end{proof}

\begin{Corollary}
\label{join corollary}
If both $P \subseteq [0,q-2]^n$ and $Q\subseteq [0,q-2]^m$ contain either a lattice segment of length at least $2$ or a square of 
side length $1$, then $(q-1)^{n+m} \leq (q-1)N(P)N(Q) + ((q-1)^n-N(P))((q-1)^m-N(Q))).$ 


Thus, $$\delta(P*Q) = \min\left\{\delta(P), \delta(Q),
\delta(P) + \delta(Q) - \delta(P)\delta(Q)\left( \frac{q}{q-1}\right)\right\}.$$
\end{Corollary}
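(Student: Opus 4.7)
The plan is to reduce the claim directly to Proposition \ref{join saddlepoint} by verifying its hypotheses $N(P) \geq 2(q-1)^n/q$ and $N(Q) \geq 2(q-1)^m/q$ under either of the two geometric conditions. Once these are satisfied, we get $(q-1)^{n+m} \leq (q-1)N(P)N(Q) + ((q-1)^n - N(P))((q-1)^m - N(Q))$, so the $(q-1)^{n+m}$ term is redundant in the maximum defining $N(P*Q)$ in Proposition \ref{join formula}. Translating back via $\delta(P*Q) = 1 - N(P*Q)/(q-1)^{n+m+1}$ then eliminates the $1 - 1/(q-1)$ entry from the minimum, leaving the stated three-term expression.

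The heart of the proof is therefore to establish, for any integral convex polytope $R \subseteq [0,q-2]^k$ containing either a lattice segment of length $2$ or a unit square, that $N(R) \geq 2(q-1)^k/q$. After replacing $R$ by a lattice-equivalent polytope (which preserves $N$) we may assume the sub-polytope sits in standard position along the first one or two coordinate axes. In the segment case, $\mathcal{L}_R$ contains $(x_1 - a_1)(x_1 - a_2)$ for distinct $a_1, a_2 \in \mathbb{F}_q^\times$; lifting the zero count from one variable to $k$ variables as in the proof of Proposition \ref{delta dimension invariant} gives $2(q-1)^{k-1}$ zeros, and the comparison $2(q-1)^{k-1} \geq 2(q-1)^k/q$ reduces to $q \geq q-1$, which is trivial. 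In the unit-square case, $\mathcal{L}_R$ contains $(x_1 - a)(x_2 - b)$, which by the computation in Example \ref{ex:UnitBox} has $2q - 3$ zeros in $(\mathbb{F}_q^\times)^2$ and hence $(2q-3)(q-1)^{k-2}$ zeros in $(\mathbb{F}_q^\times)^k$; the inequality $(2q-3)(q-1)^{k-2} \geq 2(q-1)^k/q$ clears denominators to $q(2q-3) \geq 2(q-1)^2$, equivalently $q \geq 2$, which holds for every prime power.

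Applying this bound to both $P$ and $Q$ feeds Proposition \ref{join saddlepoint}, and the corollary then follows by deleting the dominated term from the $\min$ in Proposition \ref{join formula}. The only mildly technical step is bookkeeping: making sure the ambient dimension $k$ is handled correctly when lifting one- and two-variable zero counts to $(\mathbb{F}_q^\times)^k$, and checking the two elementary inequalities that fall out. No genuinely new ideas beyond Propositions \ref{delta dimension invariant}, \ref{join formula}, and \ref{join saddlepoint} are required.
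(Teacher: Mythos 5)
Your proposal is correct and follows essentially the same route as the paper: in each case you exhibit the explicit quadratic $(x_1-a_1)(x_1-a_2)$ or $(x_1-a)(x_2-b)$ (after a unimodular change of coordinates), count its zeros as $2(q-1)^{k-1}$ or $(2q-3)(q-1)^{k-2}$, and check the elementary inequalities showing these meet the threshold $2(q-1)^k/q$ of Proposition \ref{join saddlepoint}, after which the $(q-1)^{n+m}$ term drops out of the maximum in Proposition \ref{join formula}. No substantive differences from the paper's argument.
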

\begin{proof}
Suppose $P$ is a polytope with a length 2 lattice segment. We can say that the
three points of this segment are without loss of generality (by applying a unimodular affine  
transformation)
$$(0, 0, \ldots, 0)$$
$$(1, 0, \ldots, 0)$$
$$(2, 0, \ldots, 0)$$
Then we see that for any $a,b \in \bb F_q^\times$
$$f = (x_1-a)(x_1-b) = ab - (a+b)x_1 + x_1^2 \in \mathcal{L}_P.$$
We can count the zeros of $f$ as 
$$Z(f) = 2 (q-1)^{n-1}$$
since there are two values for $x_1$ and $(q-1)$ values for each of the $(n-1)$ other coordinates
that result if $f=0$. In particular since $q$ is always a prime power we have that $q \geq 2$ and
$$2\frac{(q-1)^n}q \leq 2\frac{(q-1)^n}{q-1} = 2 (q-1)^{n-1} = Z(f) \leq N(P).$$
Thus any polytope with a length 2 lattice segment meets the conditions for 
Proposition \ref{join saddlepoint}.

Similarly if $P$ is a polytope with a length 1 lattice square then we can apply a unimodular affine transformation so that we have 
$$f = (x_1 - a)(x_2 - b) \in  \mathcal{L}_P, \text{ where $a \neq b$.}$$
We see that 
$$Z(f) = (q-1)^{n-1} + (q-2) (q-1)^{n-2} = (q-1)^{n-2}(q-1 + q-2).$$
Since $q\geq 2$ we have
$$2(q-1)^2 = 2q^2 - 4q + 2 \leq 2q^2 - 3q = q(q-1 + q - 2).$$
In particular, 
$$2\frac{(q-1)^n}q \leq \frac{(q-1)^{n-2}}q q(q-1 + q-2) = (q-1)^{n-2}(q-1 + q-2) = Z(f) \leq N(P).$$
Thus any polytope with a length 1 lattice square meets the conditions for 
Proposition \ref{join saddlepoint} and we have proved the first statement.  

We then note that in our case, Proposition \ref{join formula} gives

$$
N(P*Q)= \max\{ N(P)(q-1)^{m+1}, N(Q)(q-1)^{n+1}, (q-1)N(P)N(Q) + ((q-1)^n-N(P))((q-1)^m-N(Q))\}
$$
and so 
$$\delta(P*Q) = \min\left\{ \delta(P), \delta(Q),
\delta(P) + \delta(Q) - \delta(P)\delta(Q)\left( \frac{q}{q-1}\right)\right\}.$$
\end{proof}


\subsubsection{The Direct Sum}

\begin{Definition}
The \newword{subdirect sum} of two integral convex polytopes $P \subseteq \mathbb{R}^n$ and $Q \subseteq \mathbb{R}^m$ is denoted by $P \oplus Q$ and defined as $$P \oplus Q := \mathrm{conv}(\{(p,\mathbf{0}) \in \mathbb{R}^{n+m} \, | \, p \in P\} \cup  \{(\mathbf{0},q) \in \mathbb{R}^{n+m} \, | \, q \in Q\})$$ If $P$ and $Q$ both contain the origin, then $P \oplus Q$ is called the \newword{direct sum} of $P$ and $Q$.
\end{Definition}

Let $P \subseteq [0,q-2]^n$ be an integral convex polytope that contains the origin and let $P' = P \oplus [0, \ell]$, where $\ell \leq q-2$. Our goal is to compute $\delta(P')$.  For $0 \leq i \leq \ell$, let $P_i$ be the greatest integral polytope contained in the scaled polytope $\frac{\ell-i}{\ell} P$; in other words each $P_i$ is a slice of $P'$, where $x_{n+1}$, the $(n+1)^{st}$ coordinate, equals $i$ and that slice is projected to $\bb{R}^n$.  Note that $P_0 = P$, $P_\ell$ is the origin, and $P_i \subseteq P_{i-1}$, so that 
\[d(P) = d(P_0) \leq d(P_1) \leq \dots \leq d(P_\ell)=(q-1)^n. \]
By \cite[Cor. 4.3]{S2015},
\[d(C_{P'}) \geq \min_{0 \leq i \leq \ell}(q-1-i)d(C_{P_i}).\]

\begin{Proposition}
\label{directsum formula}
In the above setup, if the minimum of $(q-1-i)d(C_{P_i})$, as $i$ ranges from $0$ to $\ell$, is achieved at either $i=0$ or $i=\ell$, then
$N(P') = \mathrm{max}\{N(P)(q-1),\ell(q-1)^n\}$ 
and
$$\delta(P') = \min\left\{\delta(P), 1 - \frac{\ell}{q-1}\right\} = \min\left\{\delta(P), \delta([0,\ell])\right\}.$$
\end{Proposition}

\begin{proof}
Suppose first that the minimum occurs when $i=0$.  Then by \cite[Cor. 4.3]{S2015}, $d(C_{P'}) \geq (q-1)d(C_{P_0})$ so that
\[(q-1)^{n+1} - N(P') \geq (q-1)((q-1)^n - N(P)) = (q-1)^{n+1} - (q-1)N(P),\]
hence
\[N(P') \leq (q-1)N(P).\]
Consider a polynomial $f \in \mathcal{L}_{P}$ which has the maximum $N(P)$ zeros, then $x_{n+1}f \in \mathcal{L}_{P'}$ and has $(q-1)N(P)$ zeros, so $N(P') = (q-1)N(P)$.

 If the minimum occurs at $i=\ell$, then we have $d(C_{P'}) \geq (q-1-\ell)d(C_{P_\ell})= (q-1-\ell)(q-1)^n,$ so that
\[(q-1)^{n+1} - N(P') \geq (q-1-\ell)(q-1)^n,\]
and 
\[N(P') \leq \ell(q-1)^n.\]
Consider the polynomial $f \in \mathcal{L}_{P'}$, $f:=(x_{n+1}-a_1)(x_{n+1}-a_2) \cdots (x_{n+1}-a_\ell)$ with $a_k \in \bb{F}_q^\times$ all distinct.  Then $f$ has $\ell(q-1)^n$ zeros, so that $N(P') = \ell(q-1)^n$.  

 Therefore, $$N(P') = \max\{N(P)(q-1),\ell(q-1)^n\}. $$
Thus, $$d(C_{P'}) = \min\{(q-1)^{n+1}  - N(P)(q-1),  (q-1)^{n+1}  -\ell(q-1)^n\} = \mathrm{min}\{(q-1)d(C_{P}),(q-1)^n(q-1-\ell)\}, $$
and
$$ \delta(P') = \min\{\delta(P),\delta([0,\ell])\}.$$
 \end{proof}
 
 \begin{Remark}
 We conjecture that the the minimum of $(q-1-i)d(C_{P_i})$, as $i$ ranges from $0$ to $\ell$, is always achieved at either $i=0$ or $i=\ell$.  In fact, numerous examples suggest that 
 the sequence $\{(q-1-i)d(C_{P_i})\}$ is unimodal: either it increases (with minimum at $i=0$), decreases (with minimum at $i=l$), or it increases until $i=k$, for some $k$, and then decreases, hence achieving a minimum at either $i=0$ or $i=\ell.$  
 \end{Remark}

 We next prove a special case in which the condition of Proposition \ref{directsum formula} holds.
 
 \begin{Proposition}\label{Prop:specialCase}  Let $P \subseteq [0,q-2]^n$ be an integral convex polytope that contains the origin and $P' = P \oplus [0, \ell]$, where $\ell \leq q-2$.  We additionally assume the following:
 \begin{enumerate}
     \item there is a polynomial $f \in \mathcal{L}_P$ with $Z(f) = N(P)$ that is given by a line segment through the origin of lattice length $m$, and
     \item in each $P_i$, there is a polynomial $f_i \in \mathcal{L}_{P_i}$ with $Z(f_i) = N(P_i)$ that is given by that same line segment, but scaled to be in $P_i$.
 \end{enumerate}
 Then $(q-1-i)d(C_{P_i})$, as $i$ ranges from $0$ to $\ell$, achieves a minimum at either $i=0$ or $i=\ell$.
 \end{Proposition}
 
 \begin{proof}
By our first assumption on $P$, we have that 
\[d(C_P) = (q-1)^{n-1}(q-1-m),\]
and by the second assumption on the $P_i$, we have that
\[d(C_{P_i}) = (q-1)^{n-1}(q-1 - \lfloor \frac{\ell-i}{\ell} \cdot m \rfloor). \]
We want to show that $(q-1-i)d(C_{P_i}) = (q-1-i)(q-1)^{n-1}(q-1 - \lfloor \frac{\ell-i}{\ell} \cdot m \rfloor)$ attains a minimum at either $i=0$ or $i=\ell$.  

Consider the function $G(x) = (q-1)^{n-1}(q-1-x)(q-1 - (\frac{\ell-x}{\ell})x)$, then $G'(x) =(q-1)^{n-1}((q-1)(\frac{m}{\ell}-1)+m-2(\frac{m}{\ell})x)$ and when $G'(x)=0$, we have 
\[x = \frac{((q-1)(\frac{m}{\ell}-1)+m) \ell}{2m}.\]
Since $G''(x) =-2$, $\displaystyle x_0:=\frac{((q-1)(\frac{m}{\ell}-1)+m) \ell}{2m}$ is a maximum of $G(x)$.  We next consider three different cases:  when $x_0 \leq 0$, $x_0 > \ell $, and $0 < x_0 < \ell$.

\begin{itemize}
\item If $x_0 \leq 0$, then $G(x)$ is decreasing on $[0,\ell]$, hence $(q-1-i)d(C_{P_i})$ is decreasing on $[0,\ell]$, and attains a minimum at $i = \ell$.

\item If $x_0 > \ell $, then $G(x)$ is increasing on $[0,\ell]$, hence $(q-1-i)d(C_{P_i})$ is increasing on $[0,\ell]$, and attains a minimum at $i = 0$.

\item If $0 < x_0 < \ell$, then $G(x)$ is increasing until $x_0$ and then decreasing, this tells us that $(q-1-i)d(C_{P_i})$ is unimodal, and in particular attains a minimum at either $i=\ell$ or $i=0$.
\end{itemize}
 \end{proof}

\begin{Example}
\label{dirsimpex}
We can use Propositions \ref{directsum formula} and \ref{Prop:specialCase} to find the relative minimum distance of infinite families of toric codes. For example, consider the family of simplices where $P_0 = [0,\ell_0]$ and $$P_{i} = P_{i-1} \oplus [0,\ell_i]$$ where $0 \leq \ell_i \leq q-2$ for all $i.$ The minimum distance of these general simplices was previously computed by Little and Schwarz using Vandermonde matrices \cite{LSchwarz}*{Theorem 2}. By Theorem \ref{thm:mindistSimplex}, the minimum distance of each $P_i \subseteq [0,q-2]^{n_i}$ is given by $(q-1)^{n_i} - \ell(q-1)^{n_i-1},$ where $\ell = \max\limits_{0 \leq j \leq i}\{\ell_j\}.$ Then setting $P_{i+1} = P_i \oplus [0, \ell_{i+1}],$ we have that each cross-section, $P_{i_j},$ satisfies assumption (2) of Proposition \ref{Prop:specialCase}. Thus each $P_i$ satisfies the criteria for $P$ given in Proposition \ref{Prop:specialCase}. Therefore, if $\ell = \max\limits_{0 \leq j \leq i}\{\ell_j\}$ then $$\delta(P_i) = \min \{\delta(P_{i-1}),\delta([0,\ell_i])\} = \min_{0 \leq j \leq i}  \left\{1-\frac{\ell_{j}}{q-1} \right\} = 1 - \frac{\ell}{q-1}.$$
\end{Example}

We next work through a set of concrete examples in which Propositions \ref{directsum formula} and \ref{Prop:specialCase} apply, and we illustrate how the behavior of $(q-1-i)d(C_{P_i})$ can change when the original polytope $P$ is fixed and $\ell$ varies.  

\begin{Example}\label{ex:dirsum}
Consider the polytope $P := \text{conv}( (0,0), (2,3), (4,2)) \subseteq \bb{R}^2$ which has seven lattice points:

\begin{figure}[h!] \label{fig:p0}
    \begin{center}
    \begin{tikzpicture}[scale=1]
    \draw [semithick, ->, >=latex] (0,0) -- (4.5,0);
    \draw [semithick, ->, >=latex] (0,0) -- (0,3.5);
    \node [below left] at (0,-0.02) {$1$};
    \node [left] at (1.1,0.7) {$xy$};
    \node [below] at (2.3,1) {$x^2y$};
    \node [below left] at (2.1,2) {$x^2y^2$};
    \node [below left] at (3.1,2) {$x^3y^2$};
    \node [above] at (2.1,3) {$x^2y^3$};
    \node [right] at (4.,2) {$x^4y^2$};
    \draw [semithick, black] (0,0) -- (2,3) -- (4,2) -- (0,0) -- cycle;
    \draw [fill] (0,0) circle [radius=0.06];
    \draw [fill] (2,3) circle [radius=0.06];
    \draw [fill] (4,2) circle [radius=0.06];
    \draw [fill] (1,1) circle [radius=0.06];
    \draw [fill] (2,1) circle [radius=0.06];
    \draw [fill] (2,2) circle [radius=0.06];
    \draw [fill] (3,2) circle [radius=0.06];
    \end{tikzpicture}
    \end{center}
    \caption{The polytope $P$.} 
    \end{figure}
    Note that $N(P) = 2(q-1),$ and $f = (xy-a)(xy-b) = x^2y^2-(b+a)xy+ab$ for $a \neq b \in \mathbb{F}_q^{\times},$ is a polynomial that has $N(P)$ zeros, corresponding to a line through the origin of lattice length $2$.

We first analyze the direct sum of $P$ and $[0, \ell]$ when $\ell =5$; let $P' = P \oplus [0,5] \subseteq \bb{R}^3$, which has 18 lattice points.  In this case the slices $P_i \subseteq \bb{R}^2$ are:

  \[\begin{tabular}{|c|c|c|} \hline
  $i$ & $P' \cap \{z=i\}$ & $P_i$ \\ \hline
  $0$  & $P$ & $P_0=P$ \\  \hline
  $1$ & $\{(0,0,1), (2,1,1),(1,1,1),(2,2,1)\} $ & $P_1=\text{conv}((0,0), (2,1),(1,1),(2,2))$ \\  \hline
  $2$  & $\{(0,0,2), (2,1,2),(1,1,2)\}$ & $P_2=\text{conv}((0,0), (2,1),(1,1))$ \\  \hline
  $3$  & $\{((0,0,3), (1,1,3)\}$ & $P_3=\text{conv}((0,0), (1,1))$ \\ \hline
  $4$ & $\{(0,0,4)\}$ & $P_4=\{(0,0)\}$ \\ \hline
  $5$ & $\{(0,0,5)\}$ & $P_5=\{(0,0)\}$ \\ \hline
  
  \end{tabular}
  \]
  

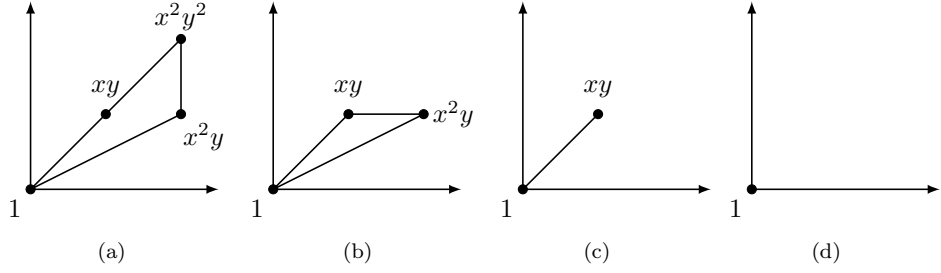
\begin{figure}[H]
    \centering
    \subfigure[]{\begin{tikzpicture}[scale=1]
    \draw [semithick, ->, >=latex] (0,0) -- (2.5,0);
    \draw [semithick, ->, >=latex] (0,0) -- (0,2.5);
    \node [below left] at (0,-0.02) {$1$};
    \node [above] at (1,1.1) {$xy$};
    \node [below] at (2.3,1) {$x^2y$};
    \node [above] at (2,2) {$x^2y^2$};
    \draw [semithick, black] (0,0) -- (2,2) -- (2,1) -- (0,0) -- cycle;
    \draw [fill] (0,0) circle [radius=0.06];
    \draw [fill] (1,1) circle [radius=0.06];
    \draw [fill] (2,1) circle [radius=0.06];
    \draw [fill] (2,2) circle [radius=0.06];
    \end{tikzpicture}}
    \subfigure[]{\begin{tikzpicture}[scale=1]
    \draw [semithick, ->, >=latex] (0,0) -- (2.5,0);
    \draw [semithick, ->, >=latex] (0,0) -- (0,2.5);
    \node [below left] at (0,-0.02) {$1$};
    \node [above] at (1,1.1) {$xy$};
    \node [right] at (2,1) {$x^2y$};
    \draw [semithick, black] (0,0) -- (1,1) -- (2,1) -- (0,0) -- cycle;
    \draw [fill] (0,0) circle [radius=0.06];
    \draw [fill] (1,1) circle [radius=0.06];
    \draw [fill] (2,1) circle [radius=0.06];
    \end{tikzpicture}}
    \subfigure[]{\begin{tikzpicture}[scale=1]
    \draw [semithick, ->, >=latex] (0,0) -- (2.5,0);
    \draw [semithick, ->, >=latex] (0,0) -- (0,2.5);
    \node [below left] at (0,-0.02) {$1$};
    \node [above] at (1,1.1) {$xy$};
    \draw [semithick, black] (0,0) -- (1,1) -- cycle;
    \draw [fill] (0,0) circle [radius=0.06];
    \draw [fill] (1,1) circle [radius=0.06];
    \end{tikzpicture}}
    \subfigure[]{\begin{tikzpicture}[scale=1]
    \draw [semithick, ->, >=latex] (0,0) -- (2.5,0);
    \draw [semithick, ->, >=latex] (0,0) -- (0,2.5);
    \node [below left] at (0,-0.02) {$1$};

    \draw [semithick, black] (0,0);
    \draw [fill] (0,0) circle [radius=0.06];
    \end{tikzpicture}}
   
    \caption{(a) Polytope $P_1$ (b) Polytope $P_2$ (c) Polytope $P_3$ (d) Polytope $P_4=P_5$}
    \label{fig:Pi}
\end{figure}




We then compute the minimum distances of each $C_{P_i}$:

\[\begin{tabular}{|c|c|c|}  \hline
$i$ & $d(C_{P_i})$ & $(q-1-i)d(C_{P_i})$ \\ \hline
$0$& $(q-1)^2-2(q-1)$ & $(q-1)^2(q-3)$ \\ \hline
$1$ & $(q-1)^2-2(q-1)$ & $(q-1)(q-2)(q-3)$ \\ \hline
$2$ & $(q-1)^2-(q-1)$ & $(q-1)(q-2)(q-3)$ \\ \hline
$3$& $(q-1)^2-(q-1)$ & $(q-1)(q-2)(q-4)$ \\ \hline
$4$& $(q-1)^2$ & $(q-1)^2(q-5)$ \\ \hline
$5$& $(q-1)^2$ & $(q-1)^2(q-6)$ \\ \hline
\end{tabular}\]

Here we see that $(q-1-i)d(C_{P_i})$ is decreasing, so the minimum occurs when $i=5$, and by Proposition \ref{directsum formula}
\[d(C_{P'}) = (q-1)^3 - 5(q-1)^2.\]

We next perform a similar analysis, keeping $P$ the same but varying $\ell$.  Let $Q':= P \oplus[0,3] \subset \bb{R}^3$, and for $0 \leq i \leq 3$ denote the slices by $Q_i$.   

\[\begin{tabular}{|c|c|c|c|}  \hline
$i$ & $Q_i$ & $d(C_{Q_i})$ & $(q-1-i)d(C_{Q_i})$ \\ \hline
$0$& $P$ & $(q-1)^2-2(q-1)$ & $(q-1)^2(q-3)$ \\ \hline
$1$ & $\text{conv}((0,0),(2,1),(1,1))$ & $(q-1)(q-2)$ & $(q-1)(q-2)^2$ \\ \hline
$2$ &$\{(0,0)\}$& $(q-1)^2$ & $(q-1)^2(q-3)$ \\ \hline
$3$ &$\{(0,0)\}$& $(q-1)^2$ & $(q-1)^2(q-4)$ \\ \hline

\end{tabular}\]
With this value of $\ell$, we see that $(q-1-i)d(C_{Q_i})$ increases from $i=0$ to $i=1$, and then decreases, attaining a minimum at $i=3$.

\end{Example}

\begin{Example}\label{ex:boxes} There are certainly examples that don't satisfy the assumptions of Proposition \ref{Prop:specialCase}.  For example, let $P \subseteq \mathbb{R}^2$ be the square $[0,3] \times [0,3]$, then a polynomial $f \in \mathcal{L}_P$ with maximum zeros is $f=(x-a_1)(x-a_2)(x-a_3)(y-b_1)(y-b_2)(y-b_3)$ for distinct $a_i\in \bb{F}_q^\times$ and $b_i \in \bb{F}_q^\times$, which corresponds to the full box, not just a line segment.  If $P'=P \oplus [0,\ell]$, then each slice $P_i$ will also be a square of the form $[0,i] \times [0,i]$ for $0 \leq i \leq 3$, and hence each polynomial $f_i \in \mathcal{L}_{P_i}$ with maximum number of zeros is not given by a line segment.  However, the conditions of Proposition \ref{directsum formula} are met for any $\ell$; that is, the minimum of $(q-1-i)d(C_{P_i})$, as $i$ ranges from $0$ to $\ell$ is achieved at $i=0$ or $i=\ell$.  Indeed, if $\ell \leq 3$, the sequence $\{(q-1-i)d(C_{P_i})\}$ is increasing for $i=0$ to $i=\ell$; and if $\ell >3$  the sequence $\{(q-1-i)d(C_{P_i})\}$ increases and then decreases.
\end{Example}


\subsection{Examples of infinite families of toric codes} We now give examples of infinite families of toric codes constructed using the polytope operations defined above along with some other common operations. Both the family of boxes and simplices are considered in \cite{SS2}*{Theorem 3.1} and \cite{SS2}*{Proposition 4.1}, but as one general construction. To clarify what fails about each family, we separate them.

\subsubsection{Family of Boxes}
Let $P_0 = [0,\ell_0]$ and $$P_i = P_{i-1} \times [0,\ell_i]$$ where $0 \leq \ell_i \leq q-2$ for all $i.$ 

\begin{Proposition}
The infinite family of toric codes corresponding to this sequence of boxes has either $\delta(P_i) \to 0$ or $R(P_i) \to 0.$
\end{Proposition}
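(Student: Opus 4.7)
The plan is to get explicit closed-form expressions for both parameters and show that their product necessarily tends to zero. By Theorem \ref{thm:mindistBox}, the minimum distance of $C_{P_i}$ is $\prod_{j=0}^{i}\bigl((q-1)-\ell_j\bigr)$, the dimension (number of lattice points) is $\prod_{j=0}^{i}(\ell_j+1)$, and the block length is $N=(q-1)^{i+1}$. Dividing through, I obtain
$$\delta(P_i)=\prod_{j=0}^{i}\frac{(q-1)-\ell_j}{q-1},\qquad R(P_i)=\prod_{j=0}^{i}\frac{\ell_j+1}{q-1}.$$

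The key observation is that for each $j$ the two numerators satisfy $(\ell_j+1)+\bigl((q-1)-\ell_j\bigr)=q$, so if I write $a_j:=\ell_j+1\in\{1,2,\ldots,q-1\}$ then the $j$-th factor of $\delta(P_i)$ is $(q-a_j)/(q-1)$ and the $j$-th factor of $R(P_i)$ is $a_j/(q-1)$. Multiplying them gives
$$R(P_i)\cdot\delta(P_i)=\prod_{j=0}^{i}\frac{a_j(q-a_j)}{(q-1)^2}.$$
Each factor here is maximized over integer $a_j\in\{1,\ldots,q-1\}$ near $a_j=q/2$, with value at most $q^2/4$ (the unrestricted real maximum). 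Hence every factor is at most $c:=q^2/\bigl(4(q-1)^2\bigr)$, and one checks that $c<1$ precisely when $q>2$.

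Therefore for $q\geq 3$ we obtain $R(P_i)\cdot\delta(P_i)\leq c^{i+1}\to 0$ as $i\to\infty$. Since by definition of an infinite family the limits $R=\lim R(P_i)$ and $\delta=\lim \delta(P_i)$ exist, the product of the limits equals the limit of the products, which is $0$. Thus $R\cdot\delta=0$, forcing $R=0$ or $\delta=0$. The case $q=2$ is degenerate (the hypothesis $0\leq\ell_j\leq q-2$ forces every $\ell_j=0$ and each $P_i$ is a single point), and can be acknowledged separately if needed.

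There is no real obstacle in this argument: the whole proof hinges on the elementary AM-GM-type inequality $a_j(q-a_j)\leq q^2/4$ combined with the constraint $q\geq 3$. The only subtle part is recognizing that $\delta$ and $R$ are "complementary" in the sense that their per-coordinate factors have numerators summing to a constant, which is what prevents them from both staying close to $1$.
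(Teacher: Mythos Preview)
Your proof is correct and takes a genuinely different route from the paper. The paper argues by case analysis: assuming $\delta>0$ forces all but finitely many $\ell_j$ to vanish (since otherwise the product $\prod_j(1-\ell_j/(q-1))$ diverges to $0$), and then $R(P_i)=\prod_j(\ell_j+1)/(q-1)$ picks up a tail of factors $1/(q-1)$ and tends to $0$; the case $R>0$ is symmetric. You instead bound the product $R(P_i)\,\delta(P_i)$ directly, noting that each per-coordinate factor $a_j(q-a_j)/(q-1)^2$ is at most $q^2/\bigl(4(q-1)^2\bigr)<1$ for $q\geq 3$, so the whole product decays exponentially. Your argument is more quantitative (it gives an explicit rate) and avoids the slightly informal ``all but finitely many'' step; it also makes explicit the degenerate case $q=2$, which the paper's argument silently requires as well, since for $q=2$ every $\ell_j=0$ and $R(P_i)\equiv 1$ does not tend to $0$.
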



\begin{proof}
By Theorem \ref{thm:mindistBox}, the infinite family of toric codes corresponding to this sequence of polytopes has 
$$\delta(P_i) = \delta(P_{i-1}) \left(1 - \frac{\ell_i}{q-1}\right)$$
and by counting lattice points
$$R(P_i) = R(P_{i-1}) \frac{\ell_i + 1}{q-1}.$$ 
We will show that either $\delta(P_i) \to 0$ or 
$R(P_i) \to 0$. First, assume that $\delta(P_i) \not \to 0$.
This implies that all but finitely many $\ell_i = 0$
(if infinitely many $\ell_i \ne 0$ then $\delta(P_i)$ is comparable to a geometric sequence of rate $(q-2)/(q-1) < 1$ and thus converges to 0).  But with this restriction on $\ell_i,$ $R(P_i) \to 0$ as $i \to \infty$ (since now $R(P_i)$ is comparable to a geometric sequence of rate $1/(q-1)$).
Similarly, assume $R(P_i) \not \to 0 .$ This implies that all but finitely many $\ell_i = q-2,$ but then $\delta \to 0$ as $i \to \infty.$ Therefore, there is no way that neither $\delta(P_i)$ nor $R(P_i)$ tends to $0$ as $i \to \infty,$ so this infinite family of toric codes is not a good infinite family. 
\end{proof}

\subsubsection{Family of Simplices}
Consider the family of simplices where $P_0 = [0,\ell_0]$ and $$P_{i} = P_{i-1} \oplus [0,\ell_i]$$ where $0 \leq \ell_i \leq q-2$ for all $i.$ 

\begin{Proposition}
The infinite family of toric codes corresponding to this sequence of simplices has $\delta(P_i) \geq \delta > 0$ for some $0 < \delta \leq 1.$
\end{Proposition}
\begin{proof}
From Example \ref{dirsimpex}, we have that $$\delta(P_i) = \min \{\delta(P_{i-1}),\delta([0,\ell_i])\} = \min_{j \leq i} \left\{1-\frac{\ell_{j}}{q-1} \right\}.$$ 
Since $\ell_i \leq q-2$ for all $i$, we know that $$\delta(P_i) \geq 1 - \frac{q-2}{q-1} = \frac{1}{q-1} > 0.$$ 
In particular, for any construction of $\{P_i\},$
if $\delta(P_i)$ converges, it will converge to some strictly positive value.
\end{proof}

\begin{Proposition}
The infinite family of toric codes corresponding to this sequence of simplices has $R(P_i) \to 0.$
\end{Proposition}
\begin{proof}
We know that $$R(P_i) \leq \frac1{(q-1)^i}{i + \ell \choose \ell} \leq \frac1{(q-1)^i} \frac{(i+\ell)^\ell}{\ell !}$$ where $\ell = \max {\ell_i}.$  Note $$ \lim_{i \to \infty}  \frac1{(q-1)^i} \frac{(i+\ell)^\ell}{\ell !} \to 0$$ since its numerator is a polynomial in $i$ and its denominator is an exponential in $i.$  Thus $R(P_i)\to 0$ as well.
\end{proof}
Since $R(P_i) \to 0$ for any construction of $\{P_i\},$ this infinite family of simplices also fails to be a good infinite family of toric codes.

\subsubsection{Iteratively Taking the Join of a Polytope with Itself}

Let $P \subseteq[0,q-2]^n$ be an integral convex polytope which contains a length two lattice segment or a unit square, and let $P_0 := P.$ For $k\geq 0$, define $P_{k+1} := P_k * P_k.$ For brevity let 
$\delta_k = \delta(P_k).$
By Corollary \ref{join corollary}, $$\delta_{k+1} = \min\left\{\delta_k,
2\delta_k - \delta_k^2\left(  \frac{q}{q-1}\right)\right\}.$$ 

\begin{Proposition}
For the sequence given above and for $k \geq 2$,
$$\delta_k = \delta_1.$$ 
\end{Proposition}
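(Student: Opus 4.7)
The plan is to treat the recursion given by Corollary \ref{join corollary} as a one-dimensional dynamical system on $[0,1]$ and show that after just one application it lands in a region on which the recursion acts as the identity.

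First I would verify that the hypothesis of Corollary \ref{join corollary} propagates up the tower. The join $P_k * P_k$ contains an affine lattice-equivalent copy of $P_k$, namely the face $\{(p,\mathbf{0}^{n_k},0)\colon p\in P_k\}$, so if $P_k$ contains a length-2 lattice segment or a unit square then so does $P_{k+1}$. Since $P_0=P$ contains one by assumption, by induction every $P_k$ does, and Corollary \ref{join corollary} applies with $P=Q=P_k$ at every stage. Setting $c=q/(q-1)$ and $f(x)=2x-cx^2$, this gives the recursion
\[
\delta_{k+1}=\min\bigl\{\delta_k,\,f(\delta_k)\bigr\}\qquad (k\ge 0).
\]

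Next I would study $f$ on $[0,1]$. It is a downward-opening parabola with vertex at $x=1/c=(q-1)/q$, where $f(1/c)=1/c$, so $f(x)\le (q-1)/q$ for every $x\in[0,1]$. Moreover $f(x)-x=x(1-cx)$, which is $\ge 0$ on $[0,(q-1)/q]$ and $\le 0$ on $[(q-1)/q,1]$. Consequently, if $\delta_k\le (q-1)/q$ then $f(\delta_k)\ge\delta_k$ and the min is $\delta_k$, giving $\delta_{k+1}=\delta_k$; whereas if $\delta_k>(q-1)/q$ then $f(\delta_k)<\delta_k$, so $\delta_{k+1}=f(\delta_k)\le (q-1)/q$.

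Combining the two cases, after a single iteration the sequence satisfies $\delta_{k+1}\le (q-1)/q$, and once in this regime it is constant. In particular $\delta_2\le (q-1)/q$, and hence $\delta_k=\delta_2$ for all $k\ge 2$. (The argument actually gives the slightly stronger $\delta_k=\delta_1$ for all $k\ge 1$.) The only step requiring care is the bookkeeping that the hypothesis of Corollary \ref{join corollary} is preserved under iteration; the rest is a short calculus check on the quadratic $f$, so I do not expect any real obstacle.
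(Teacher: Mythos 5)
Your proposal is correct and follows essentially the same route as the paper: both analyze the parabola $f(x)=2x-\frac{q}{q-1}x^2$, observe that its maximum value on $[0,1]$ is $\frac{q-1}{q}$ and that $f(x)\ge x$ on $[0,\frac{q-1}{q}]$, and conclude that after one iteration the sequence is trapped in the region where the recursion is the identity. Your explicit check that the length-2 segment or unit square persists in each $P_{k+1}$ (so that Corollary \ref{join corollary} applies at every stage) is a small point the paper leaves implicit, and your observation that the argument already gives $\delta_k=\delta_1$ for $k\ge 1$ is valid.
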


\begin{proof}
Consider the function of a real variable for $q > 2$
$$f(x) = 2x - x^2\frac{q}{q-1}.$$
Using calculus, we can find the maximum value of this function by considering
the first and second derivatives:
$$f'(x) = 2 - 2x\frac{q}{q-1}, \qquad f''(x) = -2\frac{q}{q-1}.$$
We see that $f'(x) = 0$ exactly when $x = \frac{q-1}{q},$ and since 
$q \geq 1$, we have $f''(x) < 0$ for all $x \in \bb R$. Thus, by the second derivative
test, $f$ attains a local maximum at $x = \frac{q-1}{q},$ whose value is 
$$f\left(\frac{q-1}{q}\right) = 2\frac{q-1}q - \left( \frac{q-1}q\right)^2 \frac{q}{q-1} = \frac{q-1}{q}.$$
We see that this is in fact the global maximum of $f$ since $f$ is a parabola.

Further, we see that $f(x) \geq 0$ whenever
$0 \leq x \leq 1.$ This follows from the fact that $f$ is concave down with
$f(0) = f\left(2\frac{q-1}q\right) = 0$
and $0 \leq 1 \leq 2 \frac{q-1}{q}$.

Finally, notice for $0 \leq x \leq \frac{q-1}{q}$ we have $x\frac{q}{q-1} \leq 1$ and so 
$$f(x) - x = x - x^2 \frac{q}{q-1} = x \left(1 - x\frac{q}{q-1}\right) \geq x(1-1) = 0.$$
That is, whenever $0 \leq x \leq \frac{q-1}{q}$ we 
have, $f(x) \geq x$. 

Thus our sequence, in terms of the function $f$, is given
by
$$\delta_{k+1} = \min\{\delta_k, f(\delta_k)\}.$$
Recall that by the properties of $\delta$ we have 
$0 \leq \delta_0 \leq 1$.
Thus, since $0 \leq f(\delta_0) \leq \frac{q-1}{q}$,
we have
$$0 \leq \delta_1 \leq \min\{\delta_0, \frac{q-1}{q}\}
\leq \frac{q-1}{q}.$$
It follows from the above argument that 
$f(\delta_1) \geq \delta_1$ so that 
$$0 \leq \delta_2 = \delta_1 \leq \frac{q-1}{q}.$$
Continuing by induction we have for all $k \geq 2$
$$\delta_k = \delta_1.$$
\end{proof}

Similar to the previous infinite family of codes, we see that by appropriately setting $\delta_0$, we can get $\delta_k \to \delta > 0.$ But 
again, recall that the number of lattice points of the join of two polytopes is the sum of the number of lattice points of those two polytopes. Then for this family of codes, we have that $$R(P_{k+1}) = \frac{2(\dim(C_{P_{k}}))}{(q-1)^{2n_k + 1}} \leq \frac{2(q-1)^{n_k}}{(q-1)^{2n_k + 1}} = \frac{2}{(q-1)^{n_k+1}} \to 0.$$ Therefore, it fails to be a good family since $R(P_k) = 0.$

\section{No Good Family} \label{no good family}
\subsection{Unit Hypercubes}
Given the examples above, we have a strong reason to believe that there is \emph{no 
good infinite family} of toric codes as Soprunov and Soprunova describe. 
In order to
formalize this reasoning, we introduce the following definition.

\begin{Definition}
Let $P$ be an integral convex polytope. We define $M(P)$ to be the largest 
integer $m$ such that an $m$-dimensional unit hypercube is a subset of $P$, up to 
a unimodular affine
transformation. That is 
\begin{equation*}
    M(P) = \max\{ ~ m ~ | ~ \exists \text{ a unimodular affine transformation, } A,  \text{ such that } A([0,1]^{m}) \subseteq P\}.
\end{equation*}
\end{Definition}

\begin{Proposition}
\label{boxes->delta0}
Let $\{P_i\}$ be an infinite family of toric codes, and suppose the sequence 
$\{M(P_i)\}$ is unbounded. Then if $\delta(P_i)$ converges, it converges to zero.
\end{Proposition}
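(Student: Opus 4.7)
The plan is to bound $\delta(P_i)$ from above by the relative minimum distance of a unit hypercube whose dimension grows without bound, and then show that the latter quantity tends to zero.

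First, I would unpack the hypothesis: since $\{M(P_i)\}$ is unbounded, there is a subsequence (which I may as well assume is the original sequence, after passing to it) along which $m_i := M(P_i) \to \infty$, and for each $i$ there is a unimodular affine transformation $A_i$ with $Q_i := A_i([0,1]^{m_i}) \subseteq P_i$. By construction $Q_i$ is lattice equivalent to $[0,1]^{m_i}$, so by the remark on lattice equivalent polytopes (and, when the ambient dimensions differ, Proposition \ref{delta dimension invariant} for extending $[0,1]^{m_i}$ trivially into $\mathbb{R}^{n_i}$) we have
\[
\delta(Q_i) \;=\; \delta([0,1]^{m_i}).
\]
Since $Q_i \subseteq P_i$ inside $[0,q-2]^{n_i}$, Proposition \ref{deltasubset} gives
\[
\delta(P_i) \;\leq\; \delta(Q_i) \;=\; \delta([0,1]^{m_i}).
\]

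Next I would compute $\delta([0,1]^{m_i})$ explicitly using the formula for boxes. By Theorem \ref{thm:mindistBox} with all side lengths equal to $1$,
\[
d\bigl(C_{[0,1]^{m_i}}\bigr) \;=\; \prod_{j=1}^{m_i}\bigl((q-1)-1\bigr) \;=\; (q-2)^{m_i},
\]
while the block length of this code is $(q-1)^{m_i}$. Hence
\[
\delta([0,1]^{m_i}) \;=\; \left(\frac{q-2}{q-1}\right)^{m_i}.
\]

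Finally, since $q$ is a fixed prime power with $q \geq 3$ (if $q=2$ then $[0,q-2]^n=\{0\}$ and $M(P_i)\equiv 0$, so the hypothesis would be vacuous), the ratio $\tfrac{q-2}{q-1}$ lies strictly between $0$ and $1$, so $\left(\tfrac{q-2}{q-1}\right)^{m_i} \to 0$ as $m_i \to \infty$. Combined with the inequality above, this forces $\delta(P_i) \to 0$, completing the argument. The only subtlety worth double-checking is the ambient-dimension bookkeeping in the first step, where one has to confirm that invoking the lattice-equivalence invariance of the code parameters together with Proposition \ref{delta dimension invariant} genuinely justifies replacing $\delta(Q_i)$ (a polytope in $\mathbb{R}^{n_i}$) by $\delta([0,1]^{m_i})$ (a polytope naturally in $\mathbb{R}^{m_i}$); beyond this, no real obstacle remains.
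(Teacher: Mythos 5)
Your proof is correct and follows essentially the same route as the paper's: bound $\delta(P_i)$ above by $\delta([0,1]^{M(P_i)}) = \left(\frac{q-2}{q-1}\right)^{M(P_i)}$ using Proposition \ref{deltasubset}, invariance of $\delta$ under lattice equivalence, and Theorem \ref{thm:mindistBox}. The one point to tidy is the reduction ``I may as well assume the subsequence is the original sequence'': unboundedness only gives $M(P_{i_j}) \to \infty$ along a subsequence, so after showing $\delta(P_{i_j}) \to 0$ you should, as the paper does, invoke the fact that $\delta(P_i)$ converges by Definition \ref{infinte family def} to conclude that the full sequence tends to $0$.
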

\begin{proof}
The proof of this follows from Proposition \ref{deltasubset} and the value of $\delta([0,1]^k).$ 
As usual, let us work over $\bb F_q.$

Let $\epsilon > 0$ be arbitrary and consider a subsequence $\{P_{i_j}\}$ such that 
$M(P_{i_j}) \to \infty$ as $j \to \infty$. Since $M(P_{i_j}) \to \infty$, there exists an $N$ such that whenever $j \geq N$
$$\left(\frac{q-2}{q-1}\right)^{M(P_{i_j})} < \epsilon$$ (this exists because $0 \leq \frac{q-2}{q-1} < 1$).
By the definition of $M$, there exists a unimodular affine transformation
$A$ such that $A([0,1]^{M(P_{i_j})}) \subseteq P_{i_j}$. By Theorem \ref{thm:mindistBox}, we have that for all $j \geq N$
$$\delta([0,1]^{M(P_{i_j})}) = \left(\frac{q-2}{q-1}\right)^{M(P_{i_j})}$$
Since unimodular affine transformations preserve $\delta$, we have
$$0 \leq \delta(P_{i_j}) \leq \delta(A[0,1]^{M(P_{i_j})}) = \delta([0,1]^{M(P_{i_j})}) = \left(\frac{q-2}{q-1}\right)^{M(P_{i_j})} < \epsilon.$$
That is $\delta(P_{i_j}) \to 0$ as $j \to \infty$. Since this subsequence $\delta(P_{i_j})$ converges to zero, and  $\delta(P_i)$ converges by assumption, we have that $\delta(P_i)$ converges to zero.
\end{proof}
Note that this means that any family with unbounded $M(P_i)$ cannot be good, since $\delta(P_i)$ cannot converge to a strictly positive value. 

Thus, we have a simple characterization of some families whose $\delta=0$. We conjecture the following:

\begin{Conjecture}\label{conjecture2}
Let $\{P_i\}$ be an infinite family of toric codes, and suppose the sequence 
$\{M(P_i)\}$ is bounded. Then $R(P_i) \to 0$ as $i \to \infty$.
\end{Conjecture}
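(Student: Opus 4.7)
The plan is to reduce Conjecture \ref{conjecture2} to a polynomial-in-$n$ upper bound on $|P\cap\mathbb{Z}^n|$ under the hypothesis $M(P)\leq M_0$, and then use the exponential denominator in $R(P_i) = |P_i\cap\mathbb{Z}^{n_i}|/(q-1)^{n_i}$ to force $R(P_i)\to 0$ (recall that $n_i\to\infty$ by the definition of an infinite family). A preliminary reduction handles the easy subcase: if $\dim P_i$ is bounded above by some constant $D$, then all lattice points of $P_i$ lie inside an affine $D$-dimensional sublattice of $\mathbb{Z}^{n_i}$ restricted to $[0,q-2]^{n_i}$, so $|P_i\cap\mathbb{Z}^{n_i}|\leq (q-1)^D$ is uniformly bounded and $R(P_i)\to 0$ immediately. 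After passing to a subsequence we may therefore assume $\dim P_i\to\infty$.

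The heart of the plan is to prove, by induction on $M$, a bound of the form $|P\cap\mathbb{Z}^n|\leq C(M,q)\cdot n^{f(M)}$ for every integral convex polytope $P\subseteq[0,q-2]^n$ with $M(P)\leq M$. The examples of the $d$-dilate of the standard simplex (where $M=d$ and $|P\cap\mathbb{Z}^n|=\binom{n+d}{d}$) and dilates of the cross-polytope suggest that a polynomial bound of degree $O(M)$ is the correct target. The base case $M=0$ is handled directly: two distinct lattice points of $P$ determine a lattice segment whose primitive sub-segment is, after a unimodular affine transformation, the unit segment $[0,1]$, and this would force $M(P)\geq 1$; hence $|P\cap\mathbb{Z}^n|\leq 1$. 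For the inductive step the natural move is to pick a primitive direction $w$ along which $P$ has positive lattice width, apply a unimodular transformation (which preserves $M$ by Proposition \ref{boxes->delta0}'s underlying invariance) so that $w=e_n$, and slice $P$ by the hyperplanes $\{x_n=t\}$ for $t=0,1,\ldots,q-2$ into lattice polytopes $P_t\subseteq\mathbb{R}^{n-1}$, each with $M(P_t)\leq M$.

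The main obstacle is that slicing does not automatically drop the $M$-value: one only has the weak bound $M(P_t)\leq M(P)$, not $M(P_t)\leq M(P)-1$. The essential gain is a \emph{cube-compatibility constraint}: if two consecutive slices $P_t$ and $P_{t+1}$ contained a common unimodular $M$-cube, then $P$ itself would contain an $(M+1)$-cube, contradicting $M(P)\leq M$. The plan is to leverage this together with the convexity-induced unimodal structure of the sequence $\{P_t\}$ to show that each intersection $P_t\cap P_{t+1}$ satisfies $M(P_t\cap P_{t+1})\leq M-1$, allowing the inductive hypothesis to apply there. Summing over $t\in\{0,\ldots,q-2\}$ via a ``telescoping'' of slice cardinalities in terms of intersection cardinalities would then produce a bound of degree one higher in $n$, advancing the induction. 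Making the cube-compatibility argument precise — in particular because unimodular $M$-cubes may be placed at many different positions and orientations inside each slice, so aligning them across two slices is genuinely nontrivial — is the technical crux, and may require either a case analysis on the combinatorial type of $P$ or a refinement that selects the slicing direction to be one of maximal lattice width of $P$. Should this direct route prove too delicate, a backup plan is to attempt to show that bounded $M(P_i)$ implies bounded Minkowski length of $P_i$, thereby reducing Conjecture \ref{conjecture2} to the companion conjecture on Minkowski length mentioned by the authors at the end of the introduction.
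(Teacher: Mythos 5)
First, a point of orientation: the statement you are proving is stated in the paper as Conjecture \ref{conjecture2}, not as a theorem. The authors explicitly write that they ``could not find a way to verify this intuition'' and only establish special cases (Propositions \ref{specialcase1} and \ref{specialcase2}). So there is no proof in the paper to compare against; the question is whether your plan closes the open problem, and it does not.

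The genuine gap is in the inductive step, exactly where you flag the ``technical crux.'' Your cube-compatibility observation is correct: if $A([0,1]^M)\subseteq P_t\cap P_{t+1}$, then by convexity $P$ contains the prism $A([0,1]^M)\times[t,t+1]$, which is a unimodular $(M+1)$-cube, so indeed $M(P_t\cap P_{t+1})\leq M-1$. But this only bounds the lattice points of the \emph{intersections} of consecutive slices via the inductive hypothesis. What you need to bound is $|P\cap\mathbb{Z}^n|=\sum_t |P_t\cap\mathbb{Z}^{n-1}|$, and each slice $P_t$ may itself still satisfy $M(P_t)=M$, so the inductive hypothesis at level $M-1$ never applies to the quantities you are summing. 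There is no ``telescoping'' identity expressing $|P_t|$ in terms of $|P_{t-1}\cap P_t|$: convexity (Brunn--Minkowski-type unimodality) gives \emph{lower} bounds on middle slices in terms of outer ones, which is the wrong direction, and nothing prevents consecutive slices from each being large while overlapping in a low-$M$ set. Note also that the slicing recursion decreases $n$ by one while keeping $M$ fixed, so the induction on $M$ makes no progress except through the intersections; as written, the argument does not close. Whether bounded $M$ forces a bound of the form $C(M,q)\,n^{f(M)}$ on $|P\cap\mathbb{Z}^n|$ is essentially the whole content of the conjecture, and asserting it is not proving it.

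Your backup plan is circular: the paper's Proposition \ref{minkowskiIffBox} already shows that $\{M(P_i)\}$ is bounded if and only if $\{L(P_i)\}$ is bounded, which is precisely why the authors state Conjectures \ref{conjecture2} and \ref{conjecture} as equivalent reformulations of one another. Reducing to the Minkowski-length version therefore reduces one open conjecture to an equivalent open conjecture. The preliminary reduction (bounded $\dim P_i$ implies $|P_i\cap\mathbb{Z}^{n_i}|\leq(q-1)^D$, hence $R\to 0$) and the base case $M=0$ are fine, but they do not touch the hard case.
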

The 
intuition behind this conjecture is as follows: suppose for some infinite family $\{P_i\}$, $\{M(P_i)\}$ is bounded by $M$.
By definition this means that no $P_i$ contains a lattice equivalent copy of $[0,1]^{M+1}$. Intuitively, this means that each $P_i$ ``does not take
up any $(M+1)$-dimensional volume.'' We might expect that since we know each $P_i \subseteq [0,q-2]^{n_i}$ that $R(P_i) \sim (q-1)^M$. However, we could not find a way to verify this intuition.

Instead, we'll relate our above characterization to the Minkowski length.

\subsection{Minkowski Length}
Recall that if $P$ and $Q$ are integral convex polytopes, we can construct a new integral convex polytope using the Minkowski sum.  

\begin{Definition}
Let $P$ and $Q$ be convex polytopes in $\bb R^n.$ Their \emph{Minkowski sum} is 
$$P + Q = \{ p + q \in \bb R^n \,| \,p \in P, q \in Q\}$$
which is again a convex polytope.
\end{Definition}
Figure \ref{fig:minkowski sum} shows the Minkowski sum of two unit line segments in $\bb{R}^2$:

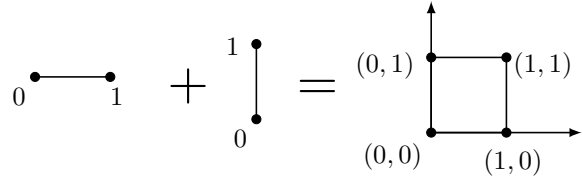
\begin{figure}[h!] 
    \begin{center}
    $$
    \adjustbox{valign=c}{\begin{tikzpicture}[scale=1]
    \node [below left] at (0,-0.02) {$0$};
    \node [below left] at (1.3,-0.02) {$1$};
    \draw [semithick, black] (0,0) -- (1,0) -- (0,0) -- cycle;
    \draw [fill] (0,0) circle [radius=0.06];
    \draw [fill] (1,0) circle [radius=0.06];
    \end{tikzpicture}}
    ~\quad\scalebox{2}{+}\quad~
    \adjustbox{valign=c}{\begin{tikzpicture}[scale=1]
    \node [below left] at (0,-0.02) {$0$};
    \node [below left] at (-0.1,1.2) {$1$};
    \draw [semithick, black] (0,0) -- (0,1) -- (0,0) -- cycle;
    \draw [fill] (0,0) circle [radius=0.06];
    \draw [fill] (0,1) circle [radius=0.06];
    \end{tikzpicture}}
    ~\quad\scalebox{2}{=}\quad~
    \adjustbox{valign=c}{\begin{tikzpicture}[scale=1]
    \draw [semithick, ->, >=latex] (0,0) -- (2,0);
    \draw [semithick, ->, >=latex] (0,0) -- (0,1.75);
    \node [below left] at (0,-0.02) {$(0,0)$};
    \node [below left] at (-0.1,1.2) {$(0,1)$};
    \node [below left] at (1.6,-0.1) {$(1,0)$};
    \node [below left] at (2,1.2) {$(1,1)$};
    \draw [semithick, black] (0,0) -- (0,1) -- (1,1) -- (1,0) -- (0,0) -- cycle;
    \draw [fill] (0,0) circle [radius=0.06];
    \draw [fill] (0,1) circle [radius=0.06];
    \draw [fill] (1,1) circle [radius=0.06];
    \draw [fill] (1,0) circle [radius=0.06];
    \end{tikzpicture}}$$
    \end{center}
    \caption{The Minkowski Sum of Lattice Segments.} 
    \label{fig:minkowski sum}
\end{figure}

Let $P$ be an integral convex lattice polytope in $\bb R^n$. Consider a Minkowski decomposition
$$P = P_1 + \ldots + P_\ell$$
into lattice polytopes $P_i$ of positive dimension (that is, none of the $P_i$ are 
single points). Clearly, there are only finitely many such decompositions. We 
let $\ell(P)$ be the largest number of summands in such decompositions of $P$, and
call it the \emph{Minkowski length} of $P$.

\begin{Definition}\cite{SS1}*{Def. 1.1}
The \emph{full Minkowski length} of $P$ is the maximum of the Minkowski lengths of all 
subpolytopes $Q$ in $P$, 
    $$L(P) := \max\{\ell(Q) ~ | ~ Q \subseteq P\}.$$
\end{Definition}

\begin{Example}
Figure \ref{fig:minkowski length} illustrates the Minkowski lengths of various polytopes.

\begin{figure}[h!] 
    \begin{center}
    
    \subfigure[]{\begin{tikzpicture}[scale=1]
    \draw [semithick, black, fill=gray!30] (0,0) -- (2,1) -- (1,2) -- (0,0) -- cycle;
     \draw [semithick, black] (1,1) -- (2,1);
    \draw [semithick, ->, >=latex] (0,0) -- (3,0);
    \draw [semithick, ->, >=latex] (0,0) -- (0,2.75);
    \node [below left] at (0,-0) {$1$};
    \node [below left] at (1,0.9) {$xy$};
    \node [below ] at (2,0.9) {$x^2y$};
    \node [ left] at (.9,1.9) {$xy^2$};

    \draw [fill] (0,0) circle [radius=0.06];
    \draw [fill] (1,1) circle [radius=0.06];
    \draw [fill] (2,1) circle [radius=0.06];
    \draw [fill] (1,2) circle [radius=0.06];
    
    \end{tikzpicture}}~
    \subfigure[]{\begin{tikzpicture}[scale=1]
    \draw [semithick, black,fill=gray!30] (0,0) -- (0,1) -- (0,2) -- (1,1) -- (2,0) -- (1,0) -- (0,0) -- cycle;
     \draw [semithick, black] (0,0) -- (0,1) -- (1,1) -- (1,0) -- (0,0) -- cycle;
    
    \draw [semithick, ->, >=latex] (0,0) -- (3,0);
    \draw [semithick, ->, >=latex] (0,0) -- (0,2.75);
    \node [below left] at (0,-.05) {$1$};
    \node [below left] at (1,0.9) {$xy$};
    \node [below left] at (0,0.9) {$y$};
    \node [below left] at (1,-0.13) {$x$};
    \node [below left] at (0,1.9) {$y^2$};
    \node [below left] at (2,0) {$x^2$};

    \draw [fill] (0,0) circle [radius=0.06];
    \draw [fill] (1,0) circle [radius=0.06];
    \draw [fill] (2,0) circle [radius=0.06];
    \draw [fill] (1,1) circle [radius=0.06];
    \draw [fill] (0,1) circle [radius=0.06];
    \draw [fill] (0,2) circle [radius=0.06];

    \end{tikzpicture}}~
    \subfigure[]{\begin{tikzpicture}[scale=1]
    \draw [semithick, black, fill=gray!30] (1,0) -- (0,1) -- (3,3) -- (3,2) -- (2,0) -- (1,0) -- cycle;
    \draw [semithick, black] (1,0) -- (1,1) -- (2,2) -- (3,3) -- (3,2) -- (2,1) -- (1,0) -- cycle;
    \draw [semithick, ->, >=latex] (0,0) -- (4,0);
    \draw [semithick, ->, >=latex] (0,0) -- (0,3.75);
    
    \node [below left] at (1,0.9) {$xy$};
    \node [below left] at (0,0.9) {$y$};
    \node [below left] at (1,-0.1) {$x$};
    \node [below left] at (2,-0.1) {$x^2$};
    \node [below] at (1.9,0.9) {$x^2y$};
    \node [below] at (2,1.9) {$x^2y^2$};
    \node [below right] at (3,1.9) {$x^3y^2$};
    \node [below right] at (3,2.9) {$x^3y^3$};

    \draw [fill] (1,0) circle [radius=0.06];
    \draw [fill] (0,1) circle [radius=0.06];
    \draw [fill] (1,1) circle [radius=0.06];
    \draw [fill] (2,0) circle [radius=0.06];
    \draw [fill] (2,1) circle [radius=0.06];
    \draw [fill] (2,2) circle [radius=0.06];
    \draw [fill] (3,2) circle [radius=0.06];
    \draw [fill] (3,3) circle [radius=0.06];
    \end{tikzpicture}}~
    \end{center}
    \caption{Polytopes with full Minkowski Lengths (a) 1 (b) 2 (c) 3. The interior polytope represents a subpolytope of the largest Minkowski sum.}
    \label{fig:minkowski length}
\end{figure}
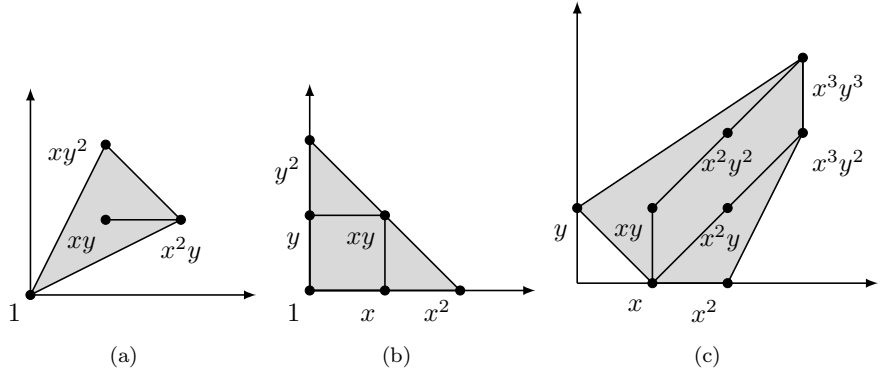
\end{Example}

 \newpage
\begin{Proposition}
\label{minkowskiIffBox}
Let $\{P_i\}$ be an infinite family of toric codes.  Then $\{M(P_i)\}$ is unbounded if and only if $\{L(P_i)\}$ is unbounded.
\end{Proposition}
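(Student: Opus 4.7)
The plan is to prove the two implications separately; the forward one is essentially immediate, while the reverse requires extracting a unit hypercube from a long Minkowski decomposition.

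For the forward direction, I would assume $\{M(P_i)\}$ is unbounded. Since both $M$ and $L$ are invariant under unimodular affine transformations, we may assume $[0,1]^{M(P_i)} \subseteq P_i$. The unit hypercube admits the Minkowski decomposition $[0,1]^k = [0,e_1] + \cdots + [0,e_k]$ into $k$ positive-dimensional segments, so $\ell([0,1]^k) \geq k$. Since $L$ is a maximum over subpolytopes, $L(P_i) \geq \ell([0,1]^{M(P_i)}) \geq M(P_i)$, yielding unboundedness of $\{L(P_i)\}$.

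For the reverse direction, I would assume $\{L(P_i)\}$ is unbounded and aim to show $\{M(P_i)\}$ is unbounded by constructing a large hypercube inside each $P_i$. For each $i$ set $L := L(P_i)$ and choose a witness subpolytope $Q_i \subseteq P_i$ with a Minkowski decomposition $Q_i = A_1 + \cdots + A_L$ into positive-dimensional lattice polytopes. After translating each $A_j$ so that $\mathbf{0} \in A_j$, each summand contains a primitive lattice segment $[\mathbf{0}, u_j]$ with $u_j \in \bb Z_{\geq 0}^{n_i}$ a primitive nonzero vector. The zonotope $Z := \sum_j [\mathbf{0}, u_j]$ sits inside $Q_i \subseteq P_i \subseteq [0,q-2]^{n_i}$, which forces $\sum_j (u_j)_\kappa \leq q-2$ for every coordinate $\kappa$. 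In particular, each coordinate lies in the support of at most $q-2$ of the $u_j$. I would then extract a sub-collection $u_{j_1}, \ldots, u_{j_k}$ whose supports are pairwise disjoint and with $k$ tending to infinity as $L \to \infty$; in each disjoint support, extending the primitive vector $u_{j_t}$ to a $\bb Z$-basis of $\bb Z^{\mathrm{supp}(u_{j_t})}$ (possible precisely because $u_{j_t}$ is primitive) and composing these block unimodular changes of variable maps $[\mathbf{0}, u_{j_1}] + \cdots + [\mathbf{0}, u_{j_k}]$ onto a coordinate-aligned $[0,1]^k \subseteq P_i$, giving $M(P_i) \geq k$.

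The hard part will be carrying out the combinatorial extraction uniformly in $L$. Viewing the supports $\{\mathrm{supp}(u_j)\}$ as hyperedges on $[n_i]$, every vertex has degree at most $q-2$, and the total edge-size sum satisfies $\sum_j |\mathrm{supp}(u_j)| \leq n_i(q-2)$. A greedy matching combined with Markov's inequality (at least $L/2$ of the $u_j$ have support of size at most $2 n_i (q-2)/L$) produces a matching of size at least $L^2 / (4 n_i (q-2)^2)$ among the small-support hyperedges. To conclude that this quantity grows with $L$, I would reduce to the essential affine dimension of $P_i$, since both $L$ and $M$ depend only on the intrinsic structure of $P_i$ rather than its ambient embedding; the constraint $L \leq n_i(q-2)$ then becomes tight enough to close the estimate. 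Controlling the degenerate situation in which many $u_j$ have nearly the full vertex set as support, so that greedy matchings alone leave gaps, is the principal technical difficulty that will require care.
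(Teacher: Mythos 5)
Your forward direction is correct and is exactly the paper's argument: the unit cube decomposes as $[0,e_1]+\cdots+[0,e_k]$, so $L(P_i)\geq M(P_i)$.

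The reverse direction, however, has a genuine gap that your own closing paragraph does not repair. Your greedy-plus-Markov estimate produces a disjoint-support subfamily of size at least $L^2/(4n_i(q-2)^2)$, but this quantity need not tend to infinity: the definition of an infinite family forces $n_i\to\infty$, and nothing prevents $L(P_i)$ from being unbounded while growing much more slowly than $\sqrt{n_i}$ (say $L(P_i)\sim\log n_i$), in which case your lower bound on the matching tends to $0$ and tells you nothing. The proposed rescue via the ``essential affine dimension'' does not close this. First, replacing $P_i$ by a lattice-equivalent copy inside its affine hull need not keep it inside $[0,q-2]^{d_i}$, so the width constraint you rely on is not available in the reduced coordinates. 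Second, even granting it, the inequality $L\leq n(q-2)$ bounds the ambient dimension from \emph{below} by $L/(q-2)$, whereas your estimate needs an upper bound on $n$ in terms of $L$; and no such bound exists, since a unimodular simplex has full Minkowski length $1$ in arbitrarily high dimension. (A smaller issue: a primitive direction $u_j$ need not lie in $\bb Z_{\geq 0}^{n_i}$; the correct consequence of $Z\subseteq[0,q-2]^{n_i}$ is $\sum_j\abs{(u_j)_\kappa}\leq q-2$, which still gives your degree bound.)

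The paper avoids supports entirely: it classifies the summand directions $v_\ell$ up to scaling, observes that if $q-1$ summands shared a direction class the Minkowski sum would have width exceeding $q-2$ in some coordinate, and concludes by pigeonhole that $L>(q-2)N$ forces at least $N$ pairwise non-parallel directions, from which it extracts the cube. Your disjoint-support condition is a strictly stronger hypothesis than pairwise non-parallelism --- it genuinely does yield a unimodular copy of $[0,1]^k$ by the block basis-extension you describe, which is a cleaner cube-extraction step than the paper's --- but it is correspondingly harder to guarantee, and the combinatorial extraction is precisely where your argument is incomplete. If you want to pursue your route, the missing ingredient is a bound of the form $L\leq f(q,\nu)$ for hypergraphs of maximum degree $q-2$ and matching number $\nu$ (independent of the number of vertices); such bounds exist, but they are not supplied by greedy matching on small edges as you have set it up.
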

\begin{proof}
We first assume that $\{M(P_i)\}$ is unbounded and let $N \in \bb N$ be arbitrary. 
Since $\{M(P_i)\}$ is unbounded, there 
exists an $i$ such that $M(P_i) > N$. In other words, there is a lattice equivalent copy of
$[0,1]^{M(P_i)}$ as a subpolytope of $P_i$. We know that the full Minkowski length
is invariant under lattice equivalence \cite{SS1}*{Prop 1.2} and thus, since
$$ [0,e_{1}] + \ldots + [0, e_{M(P_i)}] = [0,1]^{M(P_i)} \subseteq P_i,$$
where the $e_j$ are primitive integer lattice vectors, we have 
$$L(P_i) \geq \ell([0,1]^{M_i}) \geq M(P_i) > N.$$
Since $N$ was arbitrary, we see that $\{L(P_i)\}$ is unbounded.

For the reverse implication, suppose $\{L(P_i)\}$ is unbounded, and let $N \in \bb N$ be arbitrary.
Since $\{L(P_i)\}$ is unbounded there exists an $i$ such that $L(P_i) > (q-2) N$. 
That is $P_i$ has a subpolytope $Q$ that can be written as
$$Q = Q_1 + \ldots + Q_{(q-2)N}$$
where the dimension  of each $Q_\ell$ is at least one. Note that this means each $Q_\ell$ has
at least two points in it, and thus has a lattice equivalent copy of $[0,1]$ as
a subpolytope. 

For each $Q_\ell$, let $v_\ell$ be a direction in which $Q_\ell$ has span. 
That is, two distinct points $p,q \in Q_\ell$ differ by exactly $v_\ell$; or $p + v_\ell = q$.  

Consider the multi-set $V$ given by the sum over $\{v_\ell\}$ when considered as multi-sets:

$$V = \sum_{\ell = 1}^{N(q-2)} \{v_\ell\} / \sim,$$
where $v \sim w$ if there exists a scaling $\lambda \in \bb R$ such that $\lambda v = w$.
We see that since we are summing over singletons the cardinality of this multiset is exactly
$\abs{V} = N(q-2).$

Further, notice that 
if $q-1$ of the $Q_\ell$ share the same $v_\ell$ (or a scaling thereof), then their 
Minkowski sum $Q$ is \emph{wider} than $q-2$ in one of the component directions of the shared $v$. 
However,
this is explicitly forbidden since each $P_j$ lives in $[0,q-2]^{n_j}.$
That is, the multiplicity of $v/ \sim$ in $V$, $m(v)$, is at most $(q-2)$.

Putting these together, we see that 
$$N(q-2) = \abs{V} = \sum_{v \in Supp(V)} m(v) \leq \sum_{v \in Supp(V)} (q-2) = (q-2) \abs{Supp(V)}$$
That is 
$$N \leq \abs{Supp(V)}.$$

In other words, at least $N$ distinct direction vectors appear in $V$. Thus, since the $\{Q_\ell\}$ mutually contain $N$ different direction
vectors, their Minkowski sum will contain a lattice equivalent copy of 
$[0,1]^N$. That is $M(P_i) \geq N$, which completes the proof.

\end{proof}

\begin{Corollary}
Let $\{P_i\}$ be an infinite family of toric codes. Then if $\{L(P_i)\}$ is unbounded and $\delta(P_i)$ converges, 
$\delta(P_i) \to 0$ as $i \to \infty$.
\end{Corollary}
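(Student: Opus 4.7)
The plan is to observe that the Corollary follows immediately by chaining together the two results just proved. Specifically, assume $\{L(P_i)\}$ is unbounded. Then by Proposition \ref{minkowskiIffBox}, $\{M(P_i)\}$ is also unbounded. Applying Proposition \ref{boxes->delta0} to this infinite family, we conclude that $\delta(P_i) \to 0$ as $i \to \infty$.

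So there is really nothing to prove beyond invoking the biconditional of Proposition \ref{minkowskiIffBox} in one direction and then feeding its output into Proposition \ref{boxes->delta0}. The only minor bookkeeping is to note that $\{P_i\}$ being an infinite family ensures that $\delta(P_i)$ converges (by Definition \ref{infinte family def}), which is a hypothesis that Proposition \ref{boxes->delta0} already relies on, so no extra work is needed on that front.

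Since the argument is entirely formal, there is no real obstacle — the substantive work has been done in the proofs of Propositions \ref{boxes->delta0} and \ref{minkowskiIffBox}. The corollary is essentially a restatement emphasizing the more geometrically natural hypothesis (unbounded Minkowski length) in place of the hypothesis about containing arbitrarily large unit hypercubes.
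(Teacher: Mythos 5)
Your proof is correct and is exactly the paper's argument: apply Proposition \ref{minkowskiIffBox} to pass from unbounded $\{L(P_i)\}$ to unbounded $\{M(P_i)\}$, then invoke Proposition \ref{boxes->delta0}. The bookkeeping remark about convergence of $\delta(P_i)$ being part of the definition of an infinite family is a fine (if unnecessary) addition.
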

\begin{proof}
This follows directly from Propositions \ref{boxes->delta0} and \ref{minkowskiIffBox}.
\end{proof}

Now we can reformulate  Conjecture \ref{conjecture2} in terms of the Minkowski length:
\begin{Conjecture} \label{conjecture}
Let $\{P_i\}$ be an infinite family of toric codes, and suppose the sequence 
$\{L(P_i)\}$ is bounded. Then $R(P_i) \to 0$ as $i \to \infty$.
\end{Conjecture}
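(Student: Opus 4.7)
The plan is to use Proposition \ref{minkowskiIffBox} to reduce Conjecture \ref{conjecture} to its equivalent formulation in terms of $M$ (namely Conjecture \ref{conjecture2}): if $\{M(P_i)\}$ is uniformly bounded by some integer $M$, then $R(P_i) \to 0$ as $i \to \infty$. Equivalently, under the hypothesis that no $P_i$ contains a lattice-equivalent copy of $[0,1]^{M+1}$, the goal is to show that $|P_i \cap \mathbb{Z}^{n_i}| = o\bigl((q-1)^{n_i}\bigr)$.

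The central step I would attempt is a Sauer--Shelah-type lattice-point bound: \emph{there is a constant $c = c(q,M)$ such that every integral convex polytope $P \subseteq [0,q-2]^n$ with $M(P) \leq M$ satisfies $|P \cap \mathbb{Z}^n| \leq c \cdot n^M$.} Granting this, the conjecture is immediate, since $(q-1)^n$ grows exponentially while $c\, n^M$ is only polynomial, forcing $R(P_i) \to 0$.

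To build confidence in the target inequality I would verify small cases and revisit the examples of Section \ref{infinite families}. For $M=0$, $P$ is a single lattice point, so $|P \cap \mathbb{Z}^n|=1$. For $M=1$, convexity together with the absence of a unit lattice square forces $P$ to lie on a line, giving $|P \cap \mathbb{Z}^n| \leq q-1$. The $n$-simplex $\mathrm{conv}(0,e_1,\ldots,e_n)$ satisfies $M=1$ and has $n+1$ lattice points, while a $k$-dilated cross-polytope satisfies $M \leq k$ and has $\Theta(n^k)$ lattice points; the families constructed via joins, direct sums, and Cartesian products in Section \ref{infinite families} all obey analogous polynomial bounds. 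These coincidences are what motivated the conjecture in the first place.

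The main obstacle is proving the polynomial lattice-point bound. A naive slicing argument $|P \cap \mathbb{Z}^n| \leq \sum_{j=0}^{q-2}|(P \cap \{x_n = j\}) \cap \mathbb{Z}^{n-1}|$ fails: even if each slice has cube dimension at most $M$ and inductively at most $c(q,M)(n-1)^M$ lattice points, the constant $c(q,M)$ is multiplied by $q-1$ at every step, destroying the uniformity. A more promising route is to adapt the shifting or compression proofs of the classical Sauer--Shelah lemma to the lattice-polytope setting: identify a notion of ``shattering'' for which $M(P) \geq M+1$ corresponds to a shattered configuration of size $M+1$, then run a Sauer--Shelah-style induction. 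The essential new difficulty, and the reason this remains open, is that $M(P)$ is defined up to unimodular affine transformation, so the relevant combinatorial cubes are not axis-aligned; one must combine the shifting machinery with a density estimate for affine lattice cubes in $[0,q-2]^n$, which appears to require genuinely new combinatorial input.
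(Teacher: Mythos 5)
The statement you are trying to prove is stated in the paper as a \emph{conjecture}: the paper does not prove it, and indeed lists ``Prove or disprove Conjecture \ref{conjecture}'' as an open problem, establishing only the special cases in Propositions \ref{specialcase1} and \ref{specialcase2}. Your proposal is likewise not a proof. The reduction via Proposition \ref{minkowskiIffBox} to the bounded-$M$ formulation (Conjecture \ref{conjecture2}) is legitimate, and your proposed route --- a uniform polynomial bound $\abs{P \cap \bb Z^n} \leq c(q,M)\, n^{M}$ for polytopes with $M(P) \leq M$ --- would indeed immediately yield the conjecture, since $(q-1)^{n}$ grows exponentially. It is also consistent with what the paper can prove: Proposition \ref{specialcase2} obtains exactly such a polynomial bound, $(n_i+1)^{L}$, but only for polytopes that are Minkowski sums of unit simplices, and Proposition \ref{specialcase1} uses the bound $(L+1)^{n}$, which is exponential in $n$ and only helps when $L \leq q-3$. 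Your central lemma is precisely the content that is missing from the paper, and you explicitly leave it unproven; so the proposal has a genuine and essential gap rather than being an alternative proof.

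Two further concrete issues. First, your $M=1$ sanity check is wrong: the absence of a lattice-equivalent unit square does \emph{not} force $P$ onto a line, as your own next example shows --- the unit $n$-simplex $\mathrm{conv}(0,e_1,\ldots,e_n)$ has $M=1$, is full-dimensional, and has $n+1$ lattice points, so the claimed bound $\abs{P\cap\bb Z^n}\leq q-1$ for $M=1$ is false (though $n+1$ is still $O(n^1)$, so the target inequality survives this example). Second, you correctly identify the real obstruction: $M(P)$ is defined only up to unimodular affine equivalence, so the forbidden cubes are not axis-aligned, and neither a naive slicing induction nor a direct transplant of Sauer--Shelah shifting is known to work. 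Until that combinatorial lemma (or some substitute) is actually established, the argument does not close, and the statement remains a conjecture.
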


We next examine some special cases under which the conjecture holds:

\begin{Proposition} \label{specialcase1}
Let $\{P_i\}$ be an infinite family of toric codes over $\bb F_q$. If a tail of $\{L(P_i)\}$ 
is bounded above by $(q-3)$ then $R(P_i) \to 0$ as $i \to \infty$.
\end{Proposition}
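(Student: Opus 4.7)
The plan is to translate the hypothesis $L(P_i)\le q-3$ into a hard upper bound $|P_i\cap\bb Z^{n_i}|\le (q-2)^{n_i}$ on the number of lattice points; since $(q-2)^{n_i}/(q-1)^{n_i}\to 0$, this will immediately yield $R(P_i)\to 0$.

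First, by passing to a tail of the sequence I may assume $L(P_i)\le q-3$ for all $i$. The heart of the argument is then the following lemma: \emph{for any integral convex polytope $P\subseteq\bb R^n$ with $L(P)\le L$, one has $|P\cap\bb Z^n|\le (L+1)^n$.}

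To prove the lemma, I would consider the reduction map $\pi\colon P\cap\bb Z^n\to (\bb Z/(L+1)\bb Z)^n$ sending a lattice point to its coordinatewise residue class modulo $L+1$, and show $\pi$ is injective. Suppose $p\ne q$ are two lattice points in $P$ with $p\equiv q\pmod{L+1}$. Writing $p-q=ku$ with $u\in\bb Z^n$ primitive and $k\ge 1$, primitivity of $u$ guarantees integers $a_1,\ldots,a_n$ with $\sum_j a_j u_j = 1$. The congruence gives $(L+1)\mid ku_j$ for every $j$, and combining these shows $(L+1)\mid \sum_j a_j(ku_j) = k$, so $k\ge L+1$. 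But the lattice segment from $q$ to $p$ is contained in $P$ by convexity and admits the Minkowski decomposition $[0,u]+[0,u]+\cdots+[0,u]$ into $k$ nontrivial summands, so $L(P)\ge k\ge L+1$, contradicting $L(P)\le L$. Therefore $\pi$ is injective and $|P\cap\bb Z^n|\le(L+1)^n$.

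Applying the lemma with $L=q-3$ yields
\[
R(P_i)=\frac{|P_i\cap\bb Z^{n_i}|}{(q-1)^{n_i}}\le \left(\frac{q-2}{q-1}\right)^{n_i}\longrightarrow 0
\]
as $n_i\to\infty$, which is exactly the desired conclusion (the edge cases $q\le 3$ being either vacuous or reducing to the statement that $P_i$ is a single point).

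The only genuinely substantive step is the residue/primitivity argument in the lemma; once the injectivity of $\pi$ is in place, everything else follows by direct computation. The main obstacle — or rather the main insight — is recognizing that the Minkowski-length hypothesis translates cleanly into a statement about distinct residue classes modulo $L+1$.
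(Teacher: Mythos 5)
Your proof is correct and follows the same route as the paper: both reduce the statement to the bound $|P\cap\bb Z^n|\le (L+1)^n$ for a polytope of full Minkowski length $L$, and then compute $R(P_i)\le \left(\tfrac{q-2}{q-1}\right)^{n_i}\to 0$. The only difference is that the paper cites this lattice-point bound from the literature, whereas you supply a correct self-contained proof of it via the injectivity of coordinatewise reduction modulo $L+1$ (the residue/primitivity argument is sound, since two congruent points would force a segment in $P$ decomposing into at least $L+1$ primitive summands).
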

\begin{proof}
This result follows from \cite{meyer2021number} which states a simple bound for the number of lattice points of $P \subseteq \bb R^n$ with full Minkowski length $L$ as
$$\abs{P \cap \bb Z^n} \leq (L+1)^n.$$
Since a tail of $\{L(P_i)\}$ is bounded by $(q-3)$ there exists an integer
$N$,  such that whenever $i \geq N$, $L(P_i) \leq q-3.$ We have for all $i \geq N$
$$R(P_i) = \frac{\abs{P_i \cap \bb Z^{n_i}}}{(q-1)^{n_i}} \leq \frac{(L(P_i) + 1)^{n_i}}{(q-1)^{n_i}} \leq \left(\frac{q-2}{q-1}\right)^{n_i}.$$
Thus, we see a tail of $\{R(P_i)\}$ converges to 0 and so 
$R(P_i) \to 0$ as $i \to \infty$.
\end{proof}

Note that the proof of Proposition \ref{specialcase1} does not take into account the 
restriction that each $P_i \subseteq [0,q-2]^{n_i}$.

\begin{Proposition} \label{specialcase2}
Let $\{P_i\}$ be an infinite family of toric codes. Suppose each $P_i$ is 
exactly the Minkowski sum of some number of unit lattice simplices (of 
any dimension not greater than $n_i$).
If $\{L(P_i)\}$ is bounded then $R(P_i) \to 0$ as $i \to \infty$.
\end{Proposition}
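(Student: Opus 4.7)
The plan is to bound $|P_i \cap \bb Z^{n_i}|$ by a polynomial in $n_i$; since the denominator $(q-1)^{n_i}$ is exponential in $n_i$ and $n_i \to \infty$, this immediately forces $R(P_i) \to 0$.

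First, write $P_i = S_{i,1} + \cdots + S_{i,k_i}$ as a Minkowski sum of unit lattice simplices with $\dim S_{i,j} = d_{i,j} \geq 1$. Because each summand has positive dimension, this decomposition certifies that $\ell(P_i) \geq k_i$, so $k_i \leq \ell(P_i) \leq L(P_i) \leq L$, where $L$ is a uniform upper bound on $\{L(P_i)\}$. Each $S_{i,j}$ being a unit simplex of dimension $d_{i,j} \leq n_i$, it contains exactly $d_{i,j} + 1 \leq n_i + 1$ lattice points (its vertices). In particular, the vertices of $P_i$ lie among the $\prod_j (d_{i,j}+1) \leq (n_i+1)^L$ sums of vertex lattice points, which is already polynomially bounded in $n_i$.

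The main obstacle is that $P_i$ can have lattice points beyond such vertex sums: for instance, the parallelogram $[0,(2,1)] + [0,(1,2)]$ (a Minkowski sum of two unit $1$-simplices) contains interior lattice points $(1,1)$ and $(2,2)$ which are not sums of vertex lattice points of the two unit segments. To handle these ``extras,'' I would bound $|P_i \cap \bb Z^{n_i}|$ polynomially in $n_i$ via an Ehrhart-type count: combining $\dim P_i \leq \sum_j d_{i,j}$ with the Minkowski-length bound $L(P_i) \leq L$ and the ambient containment $P_i \subseteq [0,q-2]^{n_i}$, one should be able to refine the estimate of \cite{meyer2021number} from $(L+1)^{n_i}$ down to $|P_i \cap \bb Z^{n_i}| \leq C(L,q)\cdot n_i^L$, a polynomial of degree $L$ in $n_i$. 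A natural route is to contain $P_i$ in a dilate of the form $L \cdot \Delta$ for a simplex $\Delta$ of dimension $\dim P_i$, then apply $|k \Delta_d \cap \bb Z^d| = \binom{d+k}{k}$; the subtle step is constructing the enclosing simplex using that there are at most $L$ unit-simplex summands. Granting such a polynomial bound, dividing by $(q-1)^{n_i}$ and letting $n_i \to \infty$ yields $R(P_i) \to 0$, and the proof is complete.
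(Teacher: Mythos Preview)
Your overall strategy---bound $|P_i\cap\bb Z^{n_i}|$ by a polynomial of degree $L$ in $n_i$ and divide by the exponential $(q-1)^{n_i}$---is exactly the paper's strategy.  The difference is that the paper does not go through any Ehrhart-type argument at all: it simply asserts that for \emph{any} integral convex polytopes $P,Q\subseteq\bb R^n$ one has $|(P+Q)\cap\bb Z^n|\le |P\cap\bb Z^n|\cdot|Q\cap\bb Z^n|$, and since a unit $d$-simplex has $d+1\le n_i+1$ lattice points, concludes $|P_i\cap\bb Z^{n_i}|\le (n_i+1)^{L(P_i)}\le (n_i+1)^L$ immediately.

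Your ``main obstacle'' paragraph is therefore interesting, because the very example you give (the parallelogram $[0,(2,1)]+[0,(1,2)]$, which has six lattice points while $2\cdot 2=4$) shows that the multiplicative bound the paper invokes is \emph{false} for arbitrary lattice polytopes.  So the issue you flag is not an artifact of your approach---it is a genuine soft spot in the paper's own argument, at least if ``unit lattice simplex'' is read broadly as ``any unimodular lattice simplex.''  If instead one reads it narrowly as a coordinate simplex $\mathrm{conv}(0,e_{j_1},\ldots,e_{j_d})$, then Minkowski sums of such simplices are generalized permutohedra, which do have the integer decomposition property, and the paper's multiplicative bound becomes valid; under that reading your detour is unnecessary and the paper's two-line argument suffices.

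That said, your own proposal has a real gap as written: you never actually establish the promised polynomial bound.  The sentence ``one should be able to refine the estimate of \cite{meyer2021number} \ldots'' and the phrase ``Granting such a polynomial bound'' are placeholders, not arguments.  In particular, the step ``contain $P_i$ in $L\cdot\Delta$ for a simplex $\Delta$ of dimension $\dim P_i$'' is not justified---with at most $L$ summands whose edge directions can be arbitrary primitive vectors, it is not clear how to produce a single $(\dim P_i)$-simplex whose $L$-th dilate contains $P_i$ while keeping $\dim P_i$ controlled polynomially in $n_i$.  So under the broad interpretation of ``unit lattice simplex,'' neither your proof nor the paper's is complete; under the narrow interpretation, the paper's direct multiplicative bound already does the job and your extra machinery is not needed.
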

\begin{proof}
Given two  integral convex polytopes $P$ and
$Q$ in $\bb R^n$, the Minkowski sum of $P$ and $Q$  has at most $(\abs{P \cap \bb Z^n})(\abs{Q \cap \bb Z^n})$ points.
Since a unit simplex of dimension $m$ has exactly $(m+1)$ points, it follows that $| P_i \cap  \bb{Z}^{n_i}| \leq (n_i + 1)^{L(P_i)}$, given that $P_i$ is 
exactly the Minkowski sum of unit simplices. Let $L$ be such that 
for all $i$, $L(P_i) \leq L$, then 
$$0 \leq R(P_i) \leq \frac{(n_i+1)^{L(P_i)}}{(q-1)^{n_i}} \leq \frac{(n_i+1)^L}{(q-1)^{n_i}}.$$
Since $n_i \to \infty$ as $i \to \infty$ we see that $R(P_i) \to 0$
by the Squeeze Theorem.
\end{proof}


\section{Future Directions}
We end by posing a few open problems.  
\begin{Problem} Prove or disprove Conjecture \ref{conjecture} (which is equivalent to Conjecture \ref{conjecture2}). \end{Problem}

If Conjecture \ref{conjecture} is true, then we may consider the following:  define an \emph{$\epsilon$ good code} $C_P$ to be a toric code such that $\delta(P) \geq \epsilon$ and $R(P) \geq \epsilon$. Conjecture \ref{conjecture} claims that as the dimension of $P$ goes to infinity there are no $\epsilon$ good codes for any $\epsilon > 0$. This means that for every $\epsilon > 0$ there exists a largest dimension $N$ for which there exists an $\epsilon$ good code. 

\begin{Problem} For each $\epsilon > 0$, what is the $N$ after which \emph{no} code of dimension $n \geq N$ is $\epsilon$ good? \end{Problem}

The answer to this question also answers ``If I want an optimal code (so that $R + \delta = 1$) where $\delta$ (or $R$) is no worse than $\epsilon$, what is the largest dimension I can use?''

\section*{Acknowledgements}

This research was completed at the \textbf{REU Site: Mathematical Analysis and Applications at the University of Michigan-Dearborn}. We would like to thank the National Science Foundation (DMS-1950102); the National Security Agency (H98230-21); the College of Arts, Sciences, and Letters; and the Department of Mathematics and Statistics
for their support.  We also would like to thank the anonymous referee for their constructive feedback.  

~

\bibliography{REUbib}

\vspace{20pt}

\end{document}